\newtheorem{thm}{Theorem}[section]
\newtheorem{prop}[thm]{Proposition}
\newtheorem{defn}[thm]{Definition}
\newtheorem{lem}[thm]{Lemma}
\newtheorem{cor}[thm]{Corollary}
\newtheorem{conj}[thm]{Conjecture}
\newtheorem{rem}[thm]{Remark}
\newtheorem{eg}[thm]{Example}
\newcommand{\subsubsubsection}{\@startsection{paragraph}{4}{\z@}%
 {1.0\Cvs \@plus.5\Cdp \@minus.2\Cdp}%
 {.1\Cvs \@plus.3\Cdp}%
 {\reset@font\sffamily\normalsize}
 }
\newcommand{\relmiddle}[1]{\mathrel{}\middle#1\mathrel{}}
\DeclareMathOperator{\Gal}{Gal}
\DeclareMathOperator{\End}{End}
\DeclareMathOperator{\Hom}{Hom}
\DeclareMathOperator{\Ext}{Ext}
\DeclareMathOperator{\Rep}{Rep}
\DeclareMathOperator{\Aut}{Aut}
\DeclareMathOperator{\Irr}{Irr}
\DeclareMathOperator{\Image}{Im}
\DeclareMathOperator{\Lie}{Lie}
\DeclareMathOperator{\Art}{Art}
\DeclareMathOperator{\Ad}{Ad}
\DeclareMathOperator{\ad}{ad}
\newcommand{\bC}{\mathbb{C}}
\newcommand{\bG}{\mathbb{G}}
\newcommand{\bQ}{\mathbb{Q}}
\newcommand{\bR}{\mathbb{R}}
\newcommand{\bZ}{\mathbb{Z}}
\newcommand{\cC}{\mathcal{C}}
\newcommand{\cF}{\mathcal{F}}
\newcommand{\cG}{\mathcal{G}}
\newcommand{\cH}{\mathcal{H}}
\newcommand{\cL}{\mathcal{L}}
\newcommand{\cM}{\mathcal{M}}
\newcommand{\cT}{\mathcal{T}}
\newcommand{\sM}{\mathscr{M}}
\newcommand{\GL}{\mathrm{GL}}
\newcommand{\SL}{\mathrm{SL}}
\newcommand{\PGL}{\mathrm{PGL}}
\newcommand{\SU}{\mathrm{SU}}
\newcommand{\ol}{\overline}
\newcommand{\wh}{\widehat}
\newcommand{\wt}{\widetilde}
\newcommand{\lra}{\longrightarrow}
\newcommand{\cf}{\textit{cf.\ }}
\begin{document}

\title
{Local Langlands correspondences\\ in $\ell$-adic coefficients} 

\author{Naoki Imai}
\date{}
\maketitle
\begin{abstract}
Let $\ell$ be a prime number different from 
the residue characteristic of a non-archimedean local field $F$. 
We give formulations of $\ell$-adic local Langlands correspondences 
for connected reductive algebraic groups over $F$, 
which we conjecture to be independent of 
a choice of an isomorphism between the $\ell$-adic 
coefficient field and the complex number field. 
\end{abstract}

\footnotetext{2010 \textit{Mathematics Subject Classification}. 
 Primary: 11F70; Secondary: 11F80.} 

\section*{Introduction}

The local Langlands correspondence for a connected reductive algebraic group 
over a non-archimedean local field $F$ 
is usually formulated with 
coefficients in $\bC$ 
because of its relation with 
automorphic representations. 
On the other hand, 
when we discuss a realization of 
the local Langlands correspondence in $\ell$-adic cohomology, 
we need a correspondence over $\ol{\bQ}_{\ell}$, 
where $\ell$ is a prime number different from 
the residue characteristic of $F$. 
We can take an isomorphism 
$\iota \colon \bC \stackrel{\sim}{\to} \ol{\bQ}_{\ell}$ 
and use it to transfer the local Langlands correspondence 
over $\bC$ to 
a local Langlands correspondence over $\ol{\bQ}_{\ell}$. 
However, the obtained correspondence over $\ol{\bQ}_{\ell}$ 
depends on the choice of $\iota$. 
In \cite{BHLLCGL2}, Bushnell--Henniart formulated 
an $\ell$-adic local Langlands correspondence for $\GL_2$, 
which is independent of a choice of 
an isomorphism $\bC \stackrel{\sim}{\to} \ol{\bQ}_{\ell}$ 
by making some twists of L-parameters. 
The $\ell$-adic local Langlands correspondence is 
suitable to describe the non-abelian Lubin--Tate theory 
in the sense that it is canonically defined over $\ol{\bQ}_{\ell}$ (\cf \cite[5]{ITreal3}). 

In this paper, we discuss formulations of 
$\ell$-adic local Langlands correspondences for general connected 
reductive groups. 
A natural idea is to make similar twists as $\GL_2$-case 
for L-parameters. 
However, we can not make such twists in general as explained in 
Example \ref{eg:PGL}. 
A problem is that we do not have enough space 
inside the Langlands dual group to make an appropriate twist. 
To overcome this problem, we have two approaches. 
One is to introduce 
$\ell$-adic C-parameters using 
C-groups, which make spaces for twists. 
Another is to introduce 
Tannakian $\ell$-adic L-parameters that incorporate 
necessary twists in the realizations. 
Using these parameters, we formulate 
$\ell$-adic local Langlands correspondences, 
which we conjecture to be independent of 
a choice of an isomorphism $\bC \stackrel{\sim}{\to} \ol{\bQ}_{\ell}$. 
We show that two conjectures formulated by using 
$\ell$-adic C-parameters and Tannakian $\ell$-adic L-parameters 
are equivalent. 
Further, we confirm that 
the conjectures are true for $\GL_n$ and $\PGL_n$. 
The formulation of the 
$\ell$-adic local Langlands correspondence 
using Tannakian $\ell$-adic L-parameters is 
motivated by the Kottwitz conjecture for local Shimura varieties 
in \cite[Conjecture 7.4]{RVlocSh}. 
In the number field case, 
a relation between C-groups and the Kottwitz conjecture for Shimura varieties 
is discussed in \cite{JohremBG}. 

In Section \ref{sec:Lp}, we recall various versions of 
L-parameters and explain their relations. 
In Section \ref{sec:LLC}, after explaining the problem, 
we 
give formulations of the $\ell$-adic local Langlands correspondences 
introducing the $\ell$-adic C-parameter and 
the Tannakian $\ell$-adic L-parameter. 

After we put a former version of this paper on arXiv, related papers 
\cite{BerHidL} and \cite{ZhuIntSat} appeared on arXiv. 
\cite{BerHidL} also explains a relation between C-groups and 
the local Langlands correspondence. 
In a similar philosophy as this paper, 
formulations of Satake isomorphisms using C-groups 
are explained in \cite{BerHidL} and \cite{ZhuIntSat}. 

\subsection*{Acknowledgments}
The author would like to thank 
Teruhisa Koshikawa 
for his helpful comments. 
He is grateful to referees and an editor for their comments and fruitful suggestions. 
This work was supported by JSPS KAKENHI Grant Numbers JP18H01109, JP22H00093.  

\subsection*{Notation}
For a field $F$, let $\Gamma_F$ 
denote the absolute Galois group of $F$. 
For a non-archimedean local field $F$, let 
$W_F$ and $I_F$ denote the Weil group of $F$ and 
its inertia subgroup. 
For a homomorphism $\phi \colon G \to H$ of groups 
and $g \in G$, 
let $\phi^g$ denote the homomorphism 
defined by 
$\phi^g (g')=\phi (gg'g^{-1})$ for $g' \in G$. 
For a group $G$, let $Z(G)$ denote the center of $G$. 
For a group $G$, a subgroup $H \subset G$ and a subset $S \subset G$, 
let $Z_H(S)$ denote the centralizer of $S$ in $H$. 
Let $\Ad$ denote the adjoint action of a group, and 
$\ad$ denote the adjoint action of a Lie algebra. 

\section{Langlands parameters}\label{sec:Lp}

\subsection{Over fields of characteristic zero}

Let $F$ be a non-archimedean local field of residue characteristic $p$. 
Let $q$ be the number of elements of the residue field of $F$. 
Let $v_F \colon F^{\times} \to \bZ$ be the normalized valuation of $F$. 
Let 
\[
 \Art_F \colon F^{\times} \stackrel{\sim}{\lra} 
 W_F^{\mathrm{ab}} 
\]
be the Artin reciprocity isomorphism normalized so that 
a uniformizer is sent to 
a lift of the geometric Frobenius element. 
For $w \in W_F$, we put 
\[
 d_F(w)=v_F (\Art_F^{-1}(\ol{w})), \quad 
 \lvert w \rvert = q^{-d_F(w)} , 
\]
where $\ol{w}$ denotes the image of $w$ in $W_F^{\mathrm{ab}}$. 

Let $G$ be a connected reductive algebraic group over $F$. 
Let $C$ be a field of characteristic $0$. 
Let $\wh{G}$ be the dual group of $G$ over $C$. 
Let ${}^L G =\wh{G} \rtimes W_F$ 
be the L-group of $G$ defined in \cite[2.3]{BorAutL}. 
We say that an element of ${}^L G (C)$ is semisimple 
if its image in 
$\wh{G}(C) \rtimes \Gal(F'/F)$ is semisimple for any finite Galois extension 
$F'$ over $F$ that splits $G$. 
Let $p_{\wh{G}} \colon {}^L G(C) \to \wh{G}(C)$ 
denote the projection map. 

\begin{defn}
Let $H$ be a group with action of $W_F$. 
Let $\phi \colon H \rtimes W_F \to {}^L G(C)$ 
be a homomorphism of groups over $W_F$. 
\begin{enumerate}
\item 
We say that $\phi$ is semisimple 
if all the elements of $\phi (W_F)$ are semisimple. 
\item 
We say that $\phi$ is relevant if any parabolic 
subgroup $P$ of ${}^L G$ containing $\Image \phi$ is relevant in the sense of 
\cite[3.3]{BorAutL}. 
\end{enumerate}
We say that 
two homomorphisms 
$H \rtimes W_F \to {}^L G(C)$ 
of groups over $W_F$ are equivalent 
if they are conjugate by an element of $\wh{G}(C)$. 
\end{defn}

We say that a map from $W_F$ to a set is smooth if it is locally constant. 

\begin{lem}\label{lem:liftss}
Let $\phi \colon W_F \to {}^L G(C)$ 
be a homomorphism of groups over $W_F$ such that 
$p_{\wh{G}} \circ \phi$ is smooth. 
Let $\sigma_q \in W_F$ be a lift of 
the $q$-th power Frobenius element. 
Assume that $\phi(\sigma_q)$ is semisimple. 
Then $\phi$ is semisimple. 
\end{lem}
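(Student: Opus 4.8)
The plan is to reduce the statement to a question inside linear algebraic groups and then exploit the almost-abelian structure of $W_F$. First I would fix a finite Galois extension $F'/F$ that splits $G$ and put $L = \wh{G}(C) \rtimes \Gal(F'/F)$, the group of $C$-points of a linear algebraic group over $C$; composing $\phi$ with the projection ${}^L G(C) \to L$ gives a homomorphism $\psi \colon W_F \to L$. By the definition of semisimplicity for elements of ${}^L G(C)$, it suffices, letting $F'$ range over all such extensions, to show that $\psi(w)$ is semisimple in $L$ for every $w \in W_F$. By hypothesis $\psi(\sigma_q)$ is semisimple. Moreover, smoothness of $p_{\wh{G}} \circ \phi$ means its kernel is open in $W_F$, hence meets $I_F$ in a subgroup of finite index, so $p_{\wh{G}}(\phi(I_F))$ is finite; since the image of $I_F$ in $\Gal(F'/F)$ is also finite, the subgroup $\psi(I_F) \subseteq L$ is finite.

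Next I would replace $W_F$ by a convenient quotient. Let $W_1 = \ker(\psi) \cap I_F$; it is normal in $W_F$ and, since $\psi(I_F)$ is finite, of finite index in $I_F$, so $\psi$ factors through $\ol{W} := W_F / W_1$, which is an extension $1 \to Q \to \ol{W} \to \bZ \to 1$ with $Q := I_F / W_1$ finite and $\bZ$ generated by the image of the Frobenius. As $\bZ$ is free, this extension splits through the image $\ol{\sigma}$ of $\sigma_q$, so $\ol{W} = Q \rtimes \langle \ol{\sigma} \rangle$. Let $m$ be the order of the automorphism of $Q$ given by conjugation by $\ol{\sigma}$; then $z := \ol{\sigma}^{m}$ is central in $\ol{W}$, and $\ol{W} / \langle z \rangle \cong Q \rtimes (\bZ / m)$ is finite. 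Put $s := \psi(\sigma_q)^{m}$, which is a semisimple element of $L$ as a power of the semisimple element $\psi(\sigma_q)$.

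Finally, for $w \in W_F$ with image $\ol{w} \in \ol{W}$, finiteness of $\ol{W} / \langle z \rangle$ gives an integer $N \geq 1$ with $\ol{w}^{\,N} \in \langle z \rangle$, say $\ol{w}^{\,N} = z^{\,j}$; hence $\psi(w)^{N} = s^{\,j}$ is semisimple. In a linear algebraic group over a field of characteristic $0$, any element some power of which is semisimple is itself semisimple: its unipotent part $u$ then satisfies $u^{N} = 1$, and a unipotent element of finite order in characteristic $0$ is trivial. Thus $\psi(w)$ is semisimple, and, as $w$ and $F'$ were arbitrary, $\phi$ is semisimple. I do not expect a genuine obstacle; the only points needing a little care are in the first paragraph — that semisimplicity in ${}^L G(C)$ is precisely what is detected in each group $L$ (via compatibility of Jordan decomposition with the closed immersion $L \hookrightarrow {}^L G(C)$ of algebraic groups), and that smoothness of $p_{\wh{G}} \circ \phi$ really does force $\psi|_{I_F}$ to have finite image because $I_F$ is compact.
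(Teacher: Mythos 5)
Your proof is correct and takes essentially the same approach as the paper's: reduce, using finiteness of the image of $I_F$, to the observation that some power of $\phi(w)$ equals a power of the semisimple element $\phi(\sigma_q)$, and then conclude by Jordan decomposition in characteristic $0$. You make explicit the group theory (via the finite quotient $\ol{W}/\langle z\rangle$) and the reduction to semisimplicity in $\wh{G}(C)\rtimes\Gal(F'/F)$ that the paper compresses into the one-line claim that there is a $d$ with $\phi((\sigma_q^m\sigma)^d)=\phi(\sigma_q^{dm})$.
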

\begin{proof}
We write $w \in W_F$ as $\sigma_q^m \sigma$ for 
$m \in \bZ$ and $\sigma \in I_F$. 
Since $(p_{\wh{G}} \circ \phi)(I_F)$ is finite, 
there is a positive integer $d$ such that 
$\phi ((\sigma_q^m \sigma)^d)=\phi(\sigma_q^{dm})$. 
Then the claim follows, because 
$\phi(w)$ is semisimple if and only if 
$\phi(w)^d$ is semisimple. 
\end{proof}

Let $\mathit{WD}_F=\bG_{\mathrm{a}} \rtimes W_F$ 
be the Weil--Deligne group scheme over $\bQ$ for $F$ 
defined in \cite[8.3.6]{DelconstL}. 

\begin{defn}[\cf {\cite[8.2]{BorAutL}}]
\begin{enumerate}
\item 
An L-homomorphism of Weil--Deligne type for $G$ over $C$ 
is 
a homomorphism 
\[
 \xi \colon \mathit{WD}_F(C) \to {}^L G(C)
\] 
of groups 
over $W_F$ such that 
$\xi|_{\bG_{\mathrm{a}}(C)}$ is algebraic and 
$(p_{\wh{G}} \circ \xi)|_{W_F}$ is smooth. 
\item 
An L-parameter of Weil--Deligne type for $G$ over $C$ is 
a semisimple relevant L-homomorphism 
of Weil--Deligne type for $G$ over $C$. 
\end{enumerate}
\end{defn}

Let $\cL_C^{\mathrm{WD}}(G)$ denote the set of equivalence classes of 
L-homomorphisms of Weil--Deligne type for $G$ over $C$. 
Let $\Phi_C^{\mathrm{WD}}(G)$ denote the set of 
equivalence classes of 
L-parameters of Weil--Deligne type for $G$ over $C$.

\begin{defn}
\begin{enumerate}
\item 
A Weil--Deligne L-homomorphism for $G$ over $C$ is a pair $(\tau,N)$ of 
a homomorphism $\tau \colon W_F \to {}^L G(C)$ of groups over $W_F$ 
and $N \in \Lie (\wh{G})(C)$ such that 
$p_{\wh{G}} \circ \tau$ is smooth and 
$\Ad (\tau(w)) N= \lvert w \rvert N$ for $w \in W_F$. 
The second component $N$ of 
a Weil--Deligne L-homomorphism $(\tau,N)$ 
is called a monodromy operator. 
We say that two 
Weil--Deligne L-homomorphisms for $G$ over $C$ 
are equivalent if 
they are conjugate by an element of $\wh{G}(C)$.
\item 
A Weil--Deligne L-parameter for $G$ over $C$ is 
a Weil--Deligne L-homomorphism $(\tau,N)$ for $G$ over $C$ 
such that 
$\tau$ is semisimple and 
any parabolic 
subgroup $P$ of ${}^L G$ containing $\tau (W_F)$ 
and satisfying $N \in \Lie (P \cap \wh{G})(C)$ 
is relevant. 
\end{enumerate}
\end{defn}

Let $\cL_C^{\mathrm{M}}(G)$ denote the set of equivalence classes of 
Weil--Deligne L-homomorphisms for $G$ over $C$. 
Let $\Phi_C^{\mathrm{M}}(G)$ denote the set of 
equivalence classes of 
Weil--Deligne L-parameters for $G$ over $C$. 

\begin{rem}\label{rem:iotaind}
Let $\iota \colon C \stackrel{\sim}{\to} C'$ be an isomorphism of 
fields of characteristic $0$. 
Then $\iota$ induces bijections 
$\cL_C^{\mathrm{WD}}(G) \simeq \cL_{C'}^{\mathrm{WD}}(G)$, 
$\Phi_C^{\mathrm{WD}}(G) \simeq \Phi_{C'}^{\mathrm{WD}}(G)$, 
$\cL_C^{\mathrm{M}}(G) \simeq \cL_{C'}^{\mathrm{M}}(G)$
and 
$\Phi_C^{\mathrm{M}}(G) \simeq \Phi_{C'}^{\mathrm{M}}(G)$. 
\end{rem}

\begin{lem}\label{lem:Nnilp}
Let $(\tau,N)$ be a Weil--Deligne L-homomorphism for $G$ over $C$. 
Then $N$ is an nilpotent element of $\Lie (\wh{G}^{\mathrm{der}}(C))$. 
\end{lem}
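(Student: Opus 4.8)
The plan is to show that $N$ lands in the nilpotent cone of the semisimple part of $\Lie(\wh{G})$, exploiting the eigenvalue relation $\Ad(\tau(w))N = \lvert w\rvert N$ for all $w \in W_F$. The key observation is that this relation forces $N$ to be an eigenvector for a family of adjoint operators with a non-trivial scaling factor, and elements scaled non-trivially under an inner automorphism of an algebraic group must be nilpotent.

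First I would reduce to nilpotency. Fix $w_0 \in W_F$ with $\lvert w_0\rvert = q^{-1} \neq 1$ (e.g.\ a lift of the geometric Frobenius). Then $\Ad(\tau(w_0))N = q^{-1} N$. Write $\tau(w_0) = g \in {}^L G(C)$ and let $g = g_s g_u$ be its Jordan decomposition in the algebraic group ${}^L G$; since $p_{\wh{G}}\circ\tau$ is smooth, replacing $w_0$ by a suitable power we may assume $g$ is semisimple, or alternatively just note that $g_u$ acts unipotently on $\Lie(\wh{G})$ so the eigenvalue of $\Ad(g)$ on $N$ equals that of $\Ad(g_s)$, still $q^{-1}$. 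Now $\Ad(g_s)$ is a semisimple operator on $\Lie(\wh{G})(C)$, so $N$ lies in the $q^{-1}$-eigenspace. The crucial point: $\Ad(g_s)$ preserves a maximal torus normalizing datum, hence acts with eigenvalues that are products of roots evaluated on a fixed semisimple element together with the permutation action; any eigenvector with eigenvalue of infinite multiplicative order relative to the torus, or more simply any nonzero vector on which an inner automorphism acts by a scalar $\neq 1$, is nilpotent. Concretely, if $N$ were not nilpotent its semisimple part $N_s$ (in the Jordan decomposition of $N$ as an element of the Lie algebra) would be a nonzero semisimple element, and $\Ad(g)N = q^{-1}N$ would give $\Ad(g)N_s = q^{-1}N_s$; but $N_s$ lies in some Cartan subalgebra $\germ{t}$, and after conjugating we can arrange $\Ad(g)$ to preserve $\germ t$ (since $\Ad(g)$ preserves the Cartan subalgebra $Z_{\Lie(\wh G)}(N_s)$ which contains $N_s$ in its center), so $\Ad(g)$ acts on $\germ t$ through a finite-order automorphism (an element of the Weyl group semidirect the finite Galois action), contradicting the eigenvalue $q^{-1}$ which is not a root of unity. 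Hence $N$ is nilpotent.

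Next I would show $N \in \Lie(\wh{G}^{\mathrm{der}})$. Since $\wh G$ is reductive, $\Lie(\wh G) = \Lie(\wh G^{\mathrm{der}}) \oplus \germ z$ where $\germ z = \Lie(Z(\wh G)^\circ)$ is the center, and $\Ad(g)$ acts trivially on $\germ z$. Decompose $N = N' + z$ accordingly with $z \in \germ z$. Then $q^{-1}N = \Ad(g)N = \Ad(g)N' + z$, so comparing central components gives $q^{-1} z = z$, hence $z = 0$ and $N = N' \in \Lie(\wh G^{\mathrm{der}})$. Combined with nilpotency (which is unaffected, as nilpotent elements automatically lie in the derived subalgebra anyway), this completes the proof.

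The main obstacle is the nilpotency argument: one must handle the twisted (non-split) case where $\Ad(\tau(w))$ involves the outer action of $W_F$ on $\wh G$, so $\Ad(g)$ is not an inner automorphism of $\wh G$ but only of ${}^L G$. The clean way around this is to pass to a power: choose $w$ so that $\tau(w)$ projects into $\wh G \rtimes 1$ (possible since the image of $W_F$ in $\mathrm{Out}(\wh G)$ is finite and $\lvert w^k \rvert = q^{-k} \neq 1$ for the appropriate $k$), reducing to the split situation where $\Ad(g)$ genuinely is inner and the eigenvalue/root-of-unity contradiction applies directly. I expect the author's proof to be shorter, perhaps just citing that an element of $\Lie(\wh G)$ scaled by $\lvert w\rvert$ under a conjugation lies in the nilpotent cone by the standard structure theory (this is exactly the reasoning behind why the monodromy operator in a Weil--Deligne representation is nilpotent).
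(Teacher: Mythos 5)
Your proof is correct in substance but takes a genuinely different route from the paper's. The paper argues in two short steps: (i) restrict to $W_{F'}$ for a finite separable extension $F'$ splitting $G$, so that $\Ad(\tau(w))$ is an inner automorphism of $\wh{G}$ and hence acts trivially on $\Lie(Z(\wh{G})^{\circ})$; since $\lvert w\rvert\neq 1$ for some $w\in W_{F'}$, the central component of $N$ dies and $N\in\Lie(\wh{G}^{\mathrm{der}}(C))$; (ii) the operator identity $\Ad(\tau(w))\circ\ad(N)=\lvert w\rvert\,\ad(N)\circ\Ad(\tau(w))$ says $\ad(N)$ is conjugate to $\lvert w\rvert\ad(N)$, so its finite spectrum is stable under multiplication by $q^{-d_F(w)}$ and must be $\{0\}$; an $\ad$-nilpotent element of the semisimple algebra $\Lie(\wh{G}^{\mathrm{der}})$ is nilpotent. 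You instead take the Jordan decomposition of $N$ itself and kill $N_s$ by a structure-theoretic argument. That works, but it is heavier: you must actually produce an $\Ad(\tau(w_0))$-stable Cartan containing $N_s$ and justify finite order of the induced map, and note that $Z_{\Lie(\wh{G})}(N_s)$ is a Levi subalgebra, not in general a Cartan. A cleaner version of your step: $\Ad(\tau(w_0))$ preserves the torus $Z(Z_{\wh{G}}(N_s)^{\circ})^{\circ}$, whose Lie algebra contains $N_s$, and the induced map on its cocharacter lattice is an invertible integer matrix, so its eigenvalues are algebraic units, whereas $q^{-1}$ is not. The paper's $\ad(N)$-spectrum trick sidesteps all of this structure theory.

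One step is wrong as literally stated: in the derived-subalgebra part you take $g=\tau(w_0)$ for a Frobenius lift and assert that $\Ad(g)$ acts trivially on $\mathfrak{z}=\Lie(Z(\wh{G})^{\circ})$. For non-split $G$ this fails, since the $W_F$-component of $\Ad(g)$ can move the central torus (e.g. $G=\mathrm{Res}_{F'/F}\bG_{\mathrm{m}}$, where $W_F$ permutes the factors of $\wh{G}$). The remedy is exactly the device you already invoke for the nilpotency step: work with $w\in W_{F'}$ for $F'$ splitting $G$ and $\lvert w\rvert\neq 1$, so that $\Ad(\tau(w))$ is genuinely inner. With that patch both halves of your argument go through.
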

\begin{proof}
We take a finite separable extension $F'$ of $F$ that splits $G$. 
By the condition 
$\Ad (\tau(w)) N= \lvert w \rvert N$ for $w \in W_{F'}$, 
we have $N \in \Lie (\wh{G}^{\mathrm{der}}(C))$. Further, 
the adjoint endomorphism 
$\ad (N) \in \End (\Lie (\wh{G}^{\mathrm{der}}(C)))$ satisfies 
\[
 \Ad (\tau (w)) \circ \ad (N) = \lvert w \rvert \ad (N) \circ \Ad (\tau (w)) 
\]
for $w \in W_F$. 
Hence $N$ is an nilpotent element. 
\end{proof}

\begin{prop}
For an L-homomorphism of Weil--Deligne type $\xi$ 
for $G$ over $C$, 
we put $\tau_{\xi}=\xi|_{W_F}$ and 
$N_{\xi}=\Lie(\xi|_{\bG_{\mathrm{a}}(C)})(1)$. 
Then $\xi \mapsto (\tau_{\xi},N_{\xi})$ 
induces bijections 
$\cL_C^{\mathrm{WD}}(G) \simeq \cL_C^{\mathrm{M}}(G)$ 
and 
$\Phi_C^{\mathrm{WD}}(G) \simeq \Phi_C^{\mathrm{M}}(G)$. 
\end{prop}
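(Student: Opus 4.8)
The plan is to exhibit an explicit inverse to $\xi \mapsto (\tau_\xi, N_\xi)$ at the level of honest homomorphisms and of pairs, and then to check that it is compatible both with the equivalence relation given by $\wh{G}(C)$-conjugation and with the semisimplicity and relevance conditions, so that it descends to the two asserted bijections.

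First I would check that $\xi \mapsto (\tau_\xi, N_\xi)$ produces an element of $\cL_C^{\mathrm{M}}(G)$. Because $\bG_{\mathrm{a}}$ is connected, $\xi|_{\bG_{\mathrm{a}}(C)}$ factors through $\wh{G}(C)$, so $N_\xi \in \Lie(\wh{G})(C)$, while $p_{\wh{G}} \circ \tau_\xi = (p_{\wh{G}} \circ \xi)|_{W_F}$ is smooth by hypothesis. The relation $\Ad(\tau_\xi(w)) N_\xi = \lvert w \rvert N_\xi$ is obtained by differentiating the identity $\xi(w x w^{-1}) = \Ad(\xi(w))\, \xi(x)$ (valid since $\xi$ is a homomorphism over $W_F$) at the origin of $\bG_{\mathrm{a}}$, using that $w x w^{-1} = \lvert w \rvert x$ in $\mathit{WD}_F$.

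For the inverse, take $(\tau, N) \in \cL_C^{\mathrm{M}}(G)$. By Lemma \ref{lem:Nnilp}, $N$ is nilpotent in $\Lie(\wh{G})(C)$; since $\mathrm{char}\, C = 0$, there is then a unique homomorphism of algebraic groups $u_N \colon \bG_{\mathrm{a}} \to \wh{G}$ over $C$ with $\Lie(u_N)(1) = N$, given on points by $u_N(x) = \exp(xN)$. I would set $\xi(x, w) = u_N(x)\, \tau(w)$ for $(x,w) \in \mathit{WD}_F(C) = \bG_{\mathrm{a}}(C) \rtimes W_F$. A direct computation, using $\Ad(\tau(w)) N = \lvert w \rvert N$ (equivalently $\Ad(\tau(w)^{-1}) N = \lvert w \rvert^{-1} N$), shows that $\xi$ is a group homomorphism over $W_F$; it is an L-homomorphism of Weil--Deligne type because $\xi|_{\bG_{\mathrm{a}}(C)} = u_N$ is algebraic and $(p_{\wh{G}} \circ \xi)|_{W_F} = p_{\wh{G}} \circ \tau$ is smooth. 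This map is a two-sided inverse of the first one: on one side $\tau_\xi = \tau$ and $N_\xi = \Lie(u_N)(1) = N$; on the other side, an arbitrary $\xi$ is determined by its restrictions to $\bG_{\mathrm{a}}(C)$ and to $W_F$, and $\xi|_{\bG_{\mathrm{a}}(C)}$ is the unique one-parameter unipotent subgroup with derivative $N_\xi$, hence equals $u_{N_\xi}$.

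It then remains to pass to equivalence classes and to match the extra conditions. Conjugation of $\xi$ by $g \in \wh{G}(C)$ replaces $(\tau, N)$ by $(g\tau g^{-1}, \Ad(g) N)$, since $g\, u_N(x)\, g^{-1} = u_{\Ad(g)N}(x)$; this yields $\cL_C^{\mathrm{WD}}(G) \simeq \cL_C^{\mathrm{M}}(G)$. For the parameter statement, $\xi$ is semisimple if and only if all elements of $\xi(W_F) = \tau_\xi(W_F)$ are semisimple, i.e.\ if and only if $\tau_\xi$ is semisimple; and a parabolic subgroup $P$ of ${}^L G$ contains $\Image \xi$ if and only if it contains $\tau_\xi(W_F)$ together with the subgroup $u_{N_\xi}(\bG_{\mathrm{a}})$, the latter being equivalent to $N_\xi \in \Lie(P \cap \wh{G})(C)$. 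Thus $\xi$ is relevant precisely when $(\tau_\xi, N_\xi)$ satisfies the parabolic condition in the definition of a Weil--Deligne L-parameter, and the bijection restricts to $\Phi_C^{\mathrm{WD}}(G) \simeq \Phi_C^{\mathrm{M}}(G)$. I expect the only delicate points to be the homomorphism check for $\xi$ --- where one must keep track of whether the relation in $\mathit{WD}_F$ reads $w x w^{-1} = \lvert w \rvert x$ or $\lvert w \rvert^{-1} x$ --- and the last equivalence, which rests on the fact that a Zariski-closed subgroup contains a given one-parameter unipotent subgroup exactly when its Lie algebra contains the associated nilpotent element.
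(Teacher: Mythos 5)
Your proposal is correct and follows essentially the same approach as the paper: use Lemma \ref{lem:Nnilp} to get nilpotence of $N$, take $\exp(xN)$ to build the inverse $\xi_{(\tau,N)}(a,w) = \exp(aN)\tau(w)$, and then observe compatibility with conjugation and with the semisimplicity and relevance conditions. The paper states this more tersely (it invokes a unipotent radical $U$ of a Borel and the exponential $\Lie(U(C)) \to U(C)$, then says ``induces the inverses'' without spelling out the homomorphism check, the conjugation compatibility, or the matching of the parabolic conditions), whereas you fill in exactly those routine verifications; the underlying argument is the same.
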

\begin{proof}
Let $(\tau,N)$ be a Weil--Deligne L-homomorphism for $G$ over $C$. 
By Lemma \ref{lem:Nnilp}, 
there is a unipotent radical $U$ of a Borel subgroup of $\wh{G}$ 
such that $N \in \Lie (U(C))$. 
Then there is an exponential map 
\[
 \exp \colon \Lie (U(C)) \to U(C) 
\]
as in \cite[2.2]{SerExPSL}. 
We define 
\[
 \xi_{(\tau,N)} \colon \mathit{WD}_F(C) \to {}^L G(C)
\]
by 
$\xi_{(\tau,N)}(a,w) =\exp (aN) \tau(w)$. 
Then $(\tau,N) \mapsto \xi_{(\tau,N)}$ 
defines the inverses. 
\end{proof}

\subsection{Over $\bC$} 
Assume that $C=\bC$ in this subsection. 

\begin{defn}[\cf {\cite[I.2]{ArGeLecL}}]
An L-parameter for $G$ is a semisimple relevant continuous homomorphism 
\[
 \phi \colon \SU_2(\bR) \times W_F \to {}^L G(\bC) 
\] 
of groups over $W_F$. 
\end{defn}

Let $\Phi(G)$ denote the set of 
equivalence classes of L-parameters for $G$. 

\begin{defn}[\cf {\cite[IV.2]{Landeb}}]
An L-parameter of $\SL_2$-type for $G$ is 
a semisimple relevant continuous homomorphism 
\[
 \phi \colon \SL_2(\bC) \times W_F \to {}^L G(\bC) 
\] 
of groups 
over $W_F$ such that 
$\phi|_{\SL_2(\bC)}$ is algebraic. 
\end{defn}

Let $\Phi^{\SL} (G)$ denote the set of 
equivalence classes of 
L-parameters of 
$\SL_2$-type for $G$. 

\begin{prop}
The map $\Phi^{\SL} (G) \to \Phi(G)$ 
induced by the restriction with respect to 
$\SU_2(\bR) \subset \SL_2(\bC)$ 
is a bijection. 
\end{prop}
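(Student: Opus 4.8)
The plan is to construct an explicit inverse to the restriction map and check that both composites are the identity, working with the standard relationship between $\SL_2(\bC)$ and its maximal compact $\SU_2(\bR)$. First I would recall the two basic facts about algebraic group representations that make this work: (i) $\SU_2(\bR)$ is Zariski dense in $\SL_2(\bC)$, so an algebraic homomorphism out of $\SL_2(\bC)$ is determined by its restriction to $\SU_2(\bR)$; and (ii) every continuous finite-dimensional representation of $\SU_2(\bR)$ is a direct sum of the irreducibles $\Sym^k$, each of which is the restriction of an algebraic representation of $\SL_2(\bC)$, and this algebraic extension is unique. Combined, these say that the restriction functor from algebraic representations of $\SL_2(\bC)$ to continuous representations of $\SU_2(\bR)$ is fully faithful with essential image all of $\Rep_{\bC}(\SU_2(\bR))$; equivalently, every continuous homomorphism $\SU_2(\bR) \to \GL_n(\bC)$ extends uniquely to an algebraic homomorphism $\SL_2(\bC) \to \GL_n(\bC)$.

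Next I would upgrade this from $\GL_n$ to the (possibly disconnected) group ${}^L G$. Given an L-parameter $\phi\colon \SU_2(\bR)\times W_F\to {}^L G(\bC)$, I want to extend $\phi|_{\SU_2(\bR)}$ to an algebraic map $\SL_2(\bC)\to \wh G(\bC)$ (note the image lands in $\wh G$ since $\SU_2(\bR)$ is connected and maps to the identity component, and ${}^L G$ has $\wh G$ as identity component). Since $\SU_2(\bR)$ is its own commutator subgroup, its image under any homomorphism lies in $\wh G^{\mathrm{der}}$, which is semisimple, hence linear algebraic; composing with a faithful algebraic representation of $\wh G$ and invoking the $\GL_n$ statement gives an algebraic extension $\psi\colon \SL_2(\bC)\to\GL_n(\bC)$, and one checks that $\psi$ factors through $\wh G(\bC)$ because $\wh G$ is closed and $\psi(\SL_2(\bC)) = \psi(\overline{\SU_2(\bR)}) \subset \overline{\wh G(\bC)} = \wh G(\bC)$ in the Zariski topology. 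The remaining point is that the extended map on $\SL_2(\bC)$ still commutes appropriately with $\phi|_{W_F}$: for each $w\in W_F$, conjugation by $\phi(w)$ and the map $\phi|_{\SU_2(\bR)}$ agree with $\phi|_{\SU_2(\bR)}$ conjugated, and since two algebraic maps $\SL_2(\bC)\to\wh G(\bC)$ agreeing on the dense subgroup $\SU_2(\bR)$ are equal, the $\SL_2(\bC)$-extension and the $W_F$-action assemble into a genuine homomorphism $\SL_2(\bC)\times W_F\to {}^L G(\bC)$ over $W_F$ with algebraic restriction to $\SL_2(\bC)$.

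Then I would check that this construction is well-defined on equivalence classes and preserves the conditions defining an L-parameter. Semisimplicity of $\phi|_{W_F}$ is literally unchanged since it only involves the Weil group part. Relevance is a condition on parabolic subgroups of ${}^L G$ containing the image; here one uses that a parabolic $P$ contains the image of the $\SL_2(\bC)$-extension if and only if it contains the image of the $\SU_2(\bR)$-restriction, again because $P(\bC)$ is Zariski closed and $\SU_2(\bR)$ is dense in $\SL_2(\bC)$ — so the set of parabolics to be tested is the same for both. Conjugacy by $\wh G(\bC)$ is manifestly respected because extension is canonical: conjugating $\phi$ by $g\in\wh G(\bC)$ and then extending gives the $g$-conjugate of the extension. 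This shows the map $\Phi^{\SL}(G)\to\Phi(G)$ has a well-defined set-theoretic section, and uniqueness of the algebraic extension shows the section is a two-sided inverse: extending-then-restricting recovers $\phi|_{\SU_2(\bR)}$ by construction, and restricting-then-extending recovers an algebraic $\SL_2(\bC)$-parameter because it agrees with the original on the dense subgroup $\SU_2(\bR)$ and both are algebraic.

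I expect the main obstacle to be the bookkeeping around ${}^L G$ being disconnected and not a priori linear: one must be a little careful that the extension map indeed produces something valued in ${}^L G(\bC)$ (rather than just $\GL_n(\bC)$) and that the compatibility with the $W_F$-action is genuinely a homomorphism condition and not merely a pointwise statement. Both are handled by the density of $\SU_2(\bR)$ in $\SL_2(\bC)$ together with Zariski-closedness of the relevant subgroups of $\wh G$, but spelling this out cleanly — in particular the factorization through $\wh G$ and the relevance comparison — is where the real content lies; everything else is the classical representation theory of $\SU_2$ versus $\SL_2$.
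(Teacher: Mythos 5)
Your proof is correct, and it takes a genuinely different route from the paper's. The paper first forms the centralizer $\cH = Z_{\wh G}(\phi(W_F))$, invokes a lemma of Kottwitz to conclude $\cH$ is reductive, places $\phi(\SU_2(\bR))$ inside a compact real form $\cH_{\mathrm{c}}$ of $\cH$, and then cites Onishchik--Vinberg for the unique algebraic extension $\SU_2(\bR) \to \cH_{\mathrm{c}}(\bR)$ to $\SL_2(\bC) \to \cH(\bC)$; the benefit is that the extension commutes with $\phi(W_F)$ by construction, since it lands in $\cH$. You instead compose with a faithful representation of $\wh G$, use the classical $\SU_2$ versus $\SL_2$ dictionary to extend through $\GL_n(\bC)$, and then recover all the needed containments (factorization through $\wh G$, commutation with $\phi(W_F)$, and preservation of the parabolic-containment test for relevance) a posteriori from Zariski density of $\SU_2(\bR)$ in $\SL_2(\bC)$ together with Zariski-closedness of the relevant subgroups. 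Your approach is more elementary in that it bypasses the reductivity of the centralizer and the choice of a compact real form; the paper's approach packages the $W_F$-compatibility more structurally at the cost of those inputs. One small imprecision worth tightening: the chain $\psi(\SL_2(\bC)) = \psi(\overline{\SU_2(\bR)}) \subset \overline{\wh G(\bC)}$ elides the step $\psi(\overline{\SU_2(\bR)}) \subset \overline{\psi(\SU_2(\bR))}$ (continuity of the morphism for the Zariski topology), which is the point actually being used; equivalently, one can say $\psi^{-1}(\wh G(\bC))$ is closed and contains the dense subgroup $\SU_2(\bR)$. Also, the detour through $\wh G^{\mathrm{der}}$ is unnecessary since $\wh G$ is already linear algebraic, though it does no harm.
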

\begin{proof}
Let $\phi \colon \SU_2(\bR) \times W_F \to {}^L G(\bC)$ 
be an L-parameter for $G$. 
Let $\cH$ be the centralizer of $\phi(W_F)$ in $\wh{G}$. 
Then $\cH$ is reductive by \cite[10.1.1 Lemma]{KotStfcus}. 
We take a compact real form $\cH_{\mathrm{c}}$ of $\cH$ 
such that 
$\phi(\SU_2(\bR)) \subset \cH_{\mathrm{c}}(\bR)$. 
Let $\phi_{\SU_2} \colon \SU_2(\bR) \to \cH_{\mathrm{c}}(\bR)$ 
be the restriction of $\phi$ to $\SU_2(\bR)$. 
The continuous homomorphism 
$\phi_{\SU_2}$ 
extends to an algebraic morphism 
$\phi_{\SL_2} \colon \SL_2(\bC) \to \cH(\bC)$ 
uniquely by \cite[5.2.5 Theorem 11]{OnViLieag}, 
since any continuous homomorphism between real Lie groups 
are differentiable. 
Let 
$\phi^{\SL} \colon \SL_2(\bC) \times W_F \to {}^L G(\bC)$ 
be a homomorphism defined by 
$\phi_{\SL_2}$ and $\phi|_{W_F}$. 
Then $\phi \mapsto \phi^{\SL}$ induces the inverse of 
the map $\Phi^{\SL} (G) \to \Phi(G)$. 
\end{proof}

\begin{lem}\label{lem:centred}
Let $H$ be a compact topological group. 
Let $\phi \colon H \to {}^L G(\bC)$ be a continuous homomorphism. 
Then the centralizer of $\phi (H)$ in $\wh{G}$ is reductive. 
\end{lem}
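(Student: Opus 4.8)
The plan is to restate the claim as one about fixed points of a reductive group acting on a reductive group. First I would replace $H$ by its image, so that $\phi$ is injective and $H$ is a compact subgroup of ${}^L G(\bC)$. Since $\wh{G}$ is normal in ${}^L G$ and the $W_F$-action on $\wh{G}$ is by algebraic automorphisms, conjugation by any element of ${}^L G(\bC)$ restricts to an algebraic automorphism of $\wh{G}$; hence $h \mapsto \mathrm{Int}(\phi(h))|_{\wh{G}}$ is a continuous homomorphism $\theta \colon H \to \Aut(\wh{G})(\bC)$, where $\Aut(\wh{G})$ is a linear algebraic group over $\bC$ with identity component $\wh{G}_{\mathrm{ad}}$ and finite component group (the contribution of the $W_F$-factor to $\theta$ passes through a finite quotient). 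An element $g \in \wh{G}(\bC)$ commutes with $\phi(h)$ inside ${}^L G(\bC)$ exactly when it is fixed by the automorphism $\theta(h)$ of $\wh{G}$, so $Z_{\wh{G}}(\phi(H)) = \wh{G}^{\theta(H)}$.

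Next I would pass to the Zariski closure $K$ of $\theta(H)$ in $\Aut(\wh{G})$. As $\theta$ is continuous and $H$ is compact, $\theta(H)$ is a compact, hence bounded, subgroup of $\Aut(\wh{G})(\bC) \subset \GL_N(\bC)$ for a faithful representation; therefore $K$ is a reductive, possibly disconnected, algebraic subgroup of $\Aut(\wh{G})$, since the Zariski closure of a relatively compact subgroup of $\GL_N(\bC)$ is reductive. Because the set of automorphisms fixing a prescribed element of $\wh{G}$ is Zariski closed, $\wh{G}^{\theta(H)} = \wh{G}^{K}$, and it remains to prove that $\wh{G}^{K}$ is reductive.

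For this, $K$ acts on the connected reductive group $\wh{G}$ and, as we are in characteristic $0$, the reductive group $K$ is linearly reductive; the fixed-point subgroup of a reductive group under a linearly reductive group of automorphisms is reductive, which finishes the argument. This last structural input is the main obstacle. If one prefers not to quote it, one can proceed in two steps: $\wh{G}^{K^{0}}$ is the fixed-point group of the connected reductive group $K^{0}$ acting on $\wh{G}$, equivalently the centralizer in $\wh{G}$ of a lift of $K^{0} \subseteq \wh{G}_{\mathrm{ad}}$, hence reductive; and $\wh{G}^{K} = (\wh{G}^{K^{0}})^{K/K^{0}}$ is then the fixed-point subgroup of the finite group $K/K^{0}$, which acts through semisimple automorphisms because finite-order automorphisms are semisimple in characteristic $0$, so it is again reductive. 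Either route rests on the same kind of structural fact about reductive groups, so it cannot be avoided.
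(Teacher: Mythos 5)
Your argument is correct and is essentially the paper's: the proof there likewise passes to the image $K$ of $\phi(H)$ under $\Ad \colon {}^L G \to \Aut(\wh{G})$ and invokes the argument of Kottwitz (the reference \cite[10.1.2 Lemma]{KotStfcus}) for the reductivity of $\wh{G}^K$, which is precisely the structural fact you isolate and sketch at the end. The only imprecision is your aside that $\Aut(\wh{G})$ is a linear algebraic group with finite component group (false when $\wh{G}$ has central tori), but since, as you note, the $W_F$-contribution factors through a finite quotient, the image of $\theta$ lands in $\wh{G}_{\mathrm{ad}} \rtimes \Gal(F'/F)$ and nothing in the argument is affected.
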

\begin{proof}
Let $K$ be the image of $\phi (H)$ under 
$\Ad \colon {}^L G \to \Aut(\wh{G})$. 
Then $\wh{G}^K$ is reductive 
as in the proof of \cite[10.1.2 Lemma]{KotStfcus}. 
Hence we obtain the claim. 
\end{proof}

The following is a slight generalization of 
\cite[Proposition 3.5]{HeiOrbpole} and \cite[2.4]{KaLuDLH}. 

\begin{lem}\label{lem:SLmor}
Let $\cG$ be a reductive group over $\bC$. 
Let $s$ be a semisimple element of $\cG(\bC)$ and 
$u$ be a unipotent element of $\cG(\bC)$ 
such that $sus^{-1}=u^q$. 
Then there is an algebraic homomorphism 
$\theta \colon \SL_2(\bC) \to \cG(\bC)$ 
such that 
\[
 \theta \left( 
 \begin{pmatrix}
 1 & 1 \\ 
 0 & 1
 \end{pmatrix}
 \right) =u , \quad 
 \theta \left( 
 \begin{pmatrix}
 q^{\frac{1}{2}} & 0 \\ 
 0 & q^{-\frac{1}{2}} 
 \end{pmatrix}
 \right) s^{-1} \in 
 Z_{\cG(\bC)}(\Image \theta ). 
\]
Further, such $\theta$ is unique up to conjugation by 
$Z_{\cG(\bC)}(\{s,u \})$. 
\end{lem}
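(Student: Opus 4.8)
The plan is to reduce to the known statement by embedding the pair $(s,u)$ into a Jacobson--Morozov-type setup. First I would invoke Lemma \ref{lem:Nnilp}-style reasoning: write $u=\exp(N)$ for a nilpotent $N \in \Lie(\cG)(\bC)$, which is possible since any unipotent element of a reductive group over a characteristic-$0$ field lies in the unipotent radical of a Borel and the exponential is bijective there. The relation $sus^{-1}=u^q$ translates, upon taking logarithms, into $\Ad(s)N=qN$. So I am handed a nilpotent $N$ and a semisimple $s$ with $\Ad(s)N=qN$, and I want an $\mathfrak{sl}_2$-triple containing $N$ as its nilpositive element whose semisimple element is compatible with $s$ in the sense that $\theta\bigl(\mathrm{diag}(q^{1/2},q^{-1/2})\bigr)s^{-1}$ centralizes the image.

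The key steps, in order: (1) Apply the Jacobson--Morozov theorem to get \emph{some} $\mathfrak{sl}_2$-triple $(N,H_0,N^-)$ in $\Lie(\cG)(\bC)$. (2) Use the theorem of Kostant--Mal'cev (as in the cited \cite[Proposition 3.5]{HeiOrbpole}) that all such triples with fixed $N$ are conjugate under $Z_{\cG(\bC)}(N)^{\circ}$; this gives uniqueness once existence with the extra constraint is established. (3) To pin down the correct triple, consider the element $s':=s$ acting on the $N$-eigenspace decomposition. Since $\Ad(s)N=qN$ and $s$ is semisimple, $s$ acts semisimply and there is a cocharacter: choose a square root $q^{1/2}$ and observe that $\mathrm{diag}(q^{1/2},q^{-1/2})$ under $\theta$ should act on $\Lie(\cG)$ with the same weights as a suitable power/normalization of $s$ on the subalgebra generated by the triple. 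Concretely, one shows that inside the reductive centralizer-type group one can adjust the semisimple element of a Jacobson--Morozov triple by an element of $Z_{\cG(\bC)}(N)$ so that $\theta\bigl(\mathrm{diag}(q^{1/2},q^{-1/2})\bigr)$ and $s$ differ by a central element; this uses that both induce the same grading on $\mathfrak{g}$ via $\Ad$ because $\Ad(s)$ and $\Ad(\theta(\mathrm{diag}(q^{1/2},q^{-1/2})))$ both send $N \mapsto qN$ and both are semisimple, combined with the rigidity of $\mathfrak{sl}_2$-representations. (4) Finally, exponentiate back: $\theta(\begin{pmatrix}1&1\\0&1\end{pmatrix})=\exp(N)=u$ by construction, and the diagonal-torus condition is exactly the relation arranged in step (3). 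For uniqueness up to $Z_{\cG(\bC)}(\{s,u\})$, note any two solutions have the same nilpositive $N$, hence are $Z_{\cG(\bC)}(N)^{\circ}$-conjugate by step (2); the extra centralizing condition on the semisimple element forces the conjugating element to also commute with (the relevant part of) $s$, which after a short argument lands it in $Z_{\cG(\bC)}(\{s,u\})$.

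The main obstacle I expect is step (3): matching the semisimple element of the abstract Jacobson--Morozov triple with the given $s$ up to center. The cited references \cite[Proposition 3.5]{HeiOrbpole} and \cite[2.4]{KaLuDLH} handle essentially the case $s=q^{1/2}$-eigenvalue scaling inside a centralizer; the ``slight generalization'' is precisely allowing $s$ to be an arbitrary semisimple element with $sus^{-1}=u^q$ rather than one already normalized, so the work is in showing the grading induced by $\Ad(s)$ on $\mathfrak{g}$ agrees with the one from the triple's torus after modification. This is where one uses that $Z_{\cG(\bC)}(u)$ (equivalently $Z_{\cG(\bC)}(N)$) is not reductive in general, so one must pass to the reductive quotient or the Levi part — the reductivity-of-centralizer results (Lemma \ref{lem:centred}, \cite[10.1.1 Lemma]{KotStfcus}) are the relevant input — and then apply the conjugacy of maximal tori there to align $s$ with $\theta(\mathrm{diag}(q^{1/2},q^{-1/2}))$ modulo the connected center, which is absorbed into $Z_{\cG(\bC)}(\Image\theta)$.
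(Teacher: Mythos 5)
Your route (Jacobson--Morozov at the Lie-algebra level, then an alignment step) is closer to Kazhdan--Lusztig \cite{KaLuDLH} than to the argument the paper actually recalls, which is Heiermann's \cite{HeiOrbpole} and works entirely at the group level: take any $\theta$ with $\theta$ of the standard unipotent equal to $u$; form $\cG'=\cG\rtimes\bG_{\mathrm{m}}$ with $\bG_{\mathrm{m}}$ acting via $\Ad(\theta(\mathrm{diag}(z,z^{-1})))$; observe that $Z_{\cG'(\bC)}(\Image\theta)$ is a \emph{maximal reductive} subgroup of $Z_{\cG'(\bC)}(u)$ (a Barbasch--Vogan argument); note that $(s,q^{-1/2})$ is a semisimple element of $Z_{\cG'(\bC)}(u)$; and invoke Hochschild/Mostow conjugacy of maximal reductive subgroups to replace $\theta$ by a $Z_{\cG'(\bC)}(u)$-conjugate with $(s,q^{-1/2})\in Z_{\cG'(\bC)}(\Image\theta)$, which unwinds exactly to $\theta(\mathrm{diag}(q^{1/2},q^{-1/2}))s^{-1}\in Z_{\cG(\bC)}(\Image\theta)$.

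There is a genuine gap in your step (3), which you correctly flag as the crux. First, the target is misstated: the desired conclusion is that $\theta(\mathrm{diag}(q^{1/2},q^{-1/2}))s^{-1}$ lies in $Z_{\cG(\bC)}(\Image\theta)$, not in $Z(\cG)$. These differ drastically; for instance, if $u=1$ then one may take $\theta$ trivial, $Z_{\cG(\bC)}(\Image\theta)=\cG(\bC)$, and the lemma puts no constraint on $s$, whereas your phrasing would force $s$ into $Z(\cG)$. Second, the mechanism you sketch (``conjugacy of maximal tori'' in a reductive quotient, a ``connected center'' being absorbed) does not produce the alignment. The gradings of $\Lie(\cG)$ induced by $\Ad(s)$ and by the Jacobson--Morozov cocharacter have no reason to agree a priori; they only share the eigenvalue $q$ on the line spanned by $N$. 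What actually pins down the correct conjugate of $\theta$ is the conjugacy of maximal reductive subgroups (Levi factors) inside the \emph{non-reductive} centralizer $Z_{\cG'(\bC)}(u)$, and this requires first packaging $(s,q^{-1/2})$ as a single semisimple element of the auxiliary group $\cG'$; without that semidirect-product construction, the reductivity-of-centralizer results you cite do not bear directly on matching $s$ to the $\theta$-torus.
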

\begin{proof}
The first claim is proved in the same way as 
\cite[Proposition 3.5]{HeiOrbpole}. 
We recall the argument briefly. 
We take 
an algebraic homomorphism 
$\theta \colon \SL_2(\bC) \to \cG(\bC)$ 
such that 
\[
 \theta \left( 
 \begin{pmatrix}
 1 & 1 \\ 
 0 & 1
 \end{pmatrix}
 \right) =u . 
\]
We put $\cG'=\cG \times \bG_{\mathrm{m}}$. We define 
\begin{align*}
 S_{\cG'(\bC)}(u) &=\{ (g,z) \in \cG'(\bC) \mid gug^{-1} = u^{z^2} \}, \\ 
 S_{\cG'(\bC)}( \theta) 
 &=\left\{ (g,z) \in \cG'(\bC) \relmiddle{|} \Ad (g) \circ \theta = \theta \circ \Ad \left( 
 \begin{pmatrix}
 z & 0 \\ 
 0 & z^{-1} 
 \end{pmatrix}
 \right)  \right\} . 
\end{align*}
Then we can see that 
$S_{\cG'(\bC)}(\theta)$ is a maximal reductive subgroup of 
$S_{\cG'(\bC)}(u)$ as in the proof of \cite[Proposition 2.4]{BaVoUnipss}. 
Then 
any maximal reductive subgroup of $S_{\cG'(\bC)}(u)$ 
is conjugate to 
$S_{\cG'(\bC)}(\theta)$ 
by \cite[VIII. Theorem 4.3]{HocBasalg} (\cf \cite[Theorem 7.1]{MosFull}). 
Hence, by replacing $\theta$ by its conjugate under 
$S_{\cG'(\bC)}(u)$, 
we may assume that 
$(s,q^{\frac{1}{2}}) \in S_{\cG'(\bC)}(\theta)$. 
Then $\theta$ satisfies the conditions in the first claim. 

The second claim is proved in the same way as 
\cite[2.4 (h)]{KaLuDLH}. 
\end{proof}

Let $\phi \colon \SL_2(\bC) \times W_F \to {}^L G (\bC)$ 
be an L-parameter of $\SL_2$-type 
for $G$. 
We define $\xi_{\phi} \colon \mathit{WD}_F(\bC) \to {}^L G (\bC)$ by 
\[
 \xi_{\phi} (a,w)=\phi \left( 
\begin{pmatrix}
 1 & a \\ 
 0 & 1 
 \end{pmatrix} 
 \begin{pmatrix}
 \lvert w\rvert^{\frac{1}{2}} & 0 \\ 
 0 & \lvert w\rvert^{-\frac{1}{2}} 
 \end{pmatrix}, w 
 \right) . 
\]
We define $\Xi \colon \Phi^{\SL} (G) \to \Phi_{\bC}^{\mathrm{WD}}(G)$ 
by $\Xi ([\phi])=[\xi_{\phi}]$. 

The following proposition is proved in \cite[Proposition 2.2]{GRAinv}. 
We give a different proof here. 

\begin{prop}\label{prop:SLWD}
The map $\Xi$ is a bijection. 
\end{prop}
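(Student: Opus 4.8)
The plan is to construct an explicit inverse to $\Xi$, essentially running the correspondence between Weil--Deligne parameters and $\SL_2$-type parameters in reverse, with Lemma \ref{lem:SLmor} supplying the $\SL_2$ whose existence is the crux of the matter. Given $[\xi] \in \Phi_{\bC}^{\mathrm{WD}}(G)$, pass to the associated Weil--Deligne L-parameter $(\tau, N)$ via the previous proposition. Fix a lift $\sigma_q \in W_F$ of the $q$-th power Frobenius; then $\tau(\sigma_q)$ is semisimple in ${}^L G(\bC)$, and I would write its image appropriately so that $s = p_{\wh G}(\tau(\sigma_q))$ together with the unipotent element $u = \exp(N)$ satisfies $sus^{-1} = u^q$ — this is exactly the relation $\Ad(\tau(\sigma_q))N = |\sigma_q| N = q^{-1} N$ exponentiated. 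One subtlety: Lemma \ref{lem:SLmor} is stated for a reductive group $\cG$ over $\bC$, whereas $s$ lives in ${}^L G(\bC)$, which is disconnected and need not be reductive as an algebraic group. I would remedy this by working inside $\cG = Z_{\wh G}(\tau(I_F'))$ for a suitable finite-index subgroup, or more cleanly by applying Lemma \ref{lem:centred} to the (compact) image of $\tau(I_F)$ to produce a reductive centralizer, then arranging $s$ and $u$ to lie in it; the key point is that $N$ and $\tau(\sigma_q)$ both commute with $\tau(I_F)$ up to the scaling relations, so everything can be pushed into a connected reductive group where Lemma \ref{lem:SLmor} applies.

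Concretely, having produced $\theta \colon \SL_2(\bC) \to \cG(\bC) \subset \wh G(\bC)$ with $\theta\begin{pmatrix} 1 & 1 \\ 0 & 1 \end{pmatrix} = u$ and $\theta\begin{pmatrix} q^{1/2} & 0 \\ 0 & q^{-1/2} \end{pmatrix} s^{-1}$ centralizing $\Image\theta$, I would define $\phi \colon \SL_2(\bC) \times W_F \to {}^L G(\bC)$ on the $W_F$-factor by $w \mapsto \theta\begin{pmatrix} |w|^{-1/2} & 0 \\ 0 & |w|^{1/2} \end{pmatrix} \tau(w)$ and on the $\SL_2$-factor by $\theta$ itself, and check that these two homomorphisms commute (using the centralizer property and the fact that $\tau(w)$ scales $\theta$ by the character $|w|$ via the relation $\Ad(\tau(w))N = |w|N$), so they assemble to a genuine homomorphism over $W_F$. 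One verifies $\phi|_{\SL_2(\bC)} = \theta$ is algebraic, that $\phi|_{W_F}$ is semisimple (the $I_F$-part has finite image, and the Frobenius image differs from $\tau(\sigma_q)$ by a semisimple element of a torus, so Lemma \ref{lem:liftss} applies after noting $p_{\wh G} \circ \phi|_{W_F}$ is smooth), and that relevance transfers: a parabolic $P$ of ${}^L G$ contains $\phi(W_F)$ and $N \in \Lie(P \cap \wh G)$ if and only if it contains $\tau(W_F)$ and $N \in \Lie(P \cap \wh G)$, since $\theta(\SL_2)$ is generated by $u = \exp N$ and the cocharacter $\begin{pmatrix} t & 0 \\ 0 & t^{-1}\end{pmatrix}$, whose containment in $P$ follows from $N \in \Lie(P \cap \wh G)$ together with $P$ being its own normalizer. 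Thus $[\phi] \in \Phi^{\SL}(G)$.

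It then remains to check the two composites are the identity. For $\Xi \circ (\text{inverse}) = \id$: unwinding the definition of $\xi_\phi$, one gets $\xi_\phi(a,w) = \theta\begin{pmatrix} 1 & a \\ 0 & 1 \end{pmatrix}\theta\begin{pmatrix} |w|^{1/2} & 0 \\ 0 & |w|^{-1/2}\end{pmatrix} \cdot \theta\begin{pmatrix} |w|^{-1/2} & 0 \\ 0 & |w|^{1/2}\end{pmatrix}\tau(w) = \exp(aN)\tau(w)$, which is exactly $\xi_{(\tau,N)}(a,w)$; so under the bijection of the previous proposition this recovers $[\xi]$. For the other composite, starting from an $\SL_2$-type parameter $\phi$, the recipe produces $s = p_{\wh G}(\tau_{\xi_\phi}(\sigma_q))$ and $u = \exp(N_{\xi_\phi}) = \phi\begin{pmatrix} 1 & 1 \\ 0 & 1\end{pmatrix}$, and $\phi|_{\SL_2(\bC)}$ itself is an admissible choice of $\theta$ in Lemma \ref{lem:SLmor}; the uniqueness clause in that lemma (up to conjugation by $Z_{\cG(\bC)}(\{s,u\}) \subset \wh G(\bC)$) forces the reconstructed $\phi$ to be $\wh G(\bC)$-conjugate to the original, hence equal in $\Phi^{\SL}(G)$. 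The main obstacle I anticipate is the first bullet above — cleanly reducing from the disconnected, possibly non-reductive ${}^L G$ to a connected reductive group to which Lemma \ref{lem:SLmor} literally applies, while simultaneously keeping track of relevance; once that is set up correctly, the rest is bookkeeping with the matrix identities and the uniqueness statement.
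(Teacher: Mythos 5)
Your overall strategy matches the paper's: invert $\Xi$ by manufacturing the $\SL_2$ via Lemma \ref{lem:SLmor}, use Lemma \ref{lem:centred} to produce a reductive group in which to apply it, and invoke the uniqueness clause of Lemma \ref{lem:SLmor} for well-definedness. But there is a genuine gap at exactly the step you flag as the main obstacle, and neither of your proposed remedies closes it. The element $s=p_{\wh G}(\tau(\sigma_q))$ does \emph{not} in general satisfy $sus^{-1}=u^q$: the hypothesis $\Ad(\tau(\sigma_q))N=\lvert\sigma_q\rvert N$ involves conjugation by the full element $\tau(\sigma_q)\in{}^LG(\bC)$, whose $W_F$-component acts on $\wh G$ through the (generally nontrivial) Galois action, so what exponentiates is $\tau(\sigma_q)\,u\,\tau(\sigma_q)^{-1}=u^q$, not the corresponding relation for the $\wh G$-projection. (Incidentally, with the paper's normalization $\lvert\sigma_q\rvert=q$ for a lift of the $q$-th power Frobenius, not $q^{-1}$.) Nor can $\tau(\sigma_q)$ be ``arranged to lie in'' $\cH=Z_{\wh G}(\xi(I_F))$: it normalizes $\cH$ but is not an element of $\wh G$ at all. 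The missing construction, which the paper supplies, is an auxiliary reductive algebraic group genuinely containing an avatar of $\tau(\sigma_q)$: choose $m_0$ so that $\sigma_q^{m_0}$ commutes with $\wh G$ in ${}^LG$ and acts trivially enough on $\xi(I_F)$, and set $\cG=\bigl(\cH\cdot\xi(\sigma_q)^{\bZ}\bigr)/(1,\sigma_q^{m_0})^{\bZ}$, a reductive group with the same identity component as $\cH$. One then takes $s$ to be the image of $\xi(0,\sigma_q)$ in $\cG(\bC)$ (not its $\wh G$-projection) and applies Lemma \ref{lem:SLmor} there; since $\theta$ lands in the identity component it factors through $\cH\subset\wh G$, which is how one returns to $\wh G$.

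A second, smaller defect is in your uniqueness step: you assert $Z_{\cG(\bC)}(\{s,u\})\subset\wh G(\bC)$, which is false for the correct $\cG$ (it contains $s$ itself). The paper uses the second condition of \eqref{eq:thecond} to replace the conjugator $g'\in Z_{\cG(\bC)}(\{s,u\})$ by one in $Z_{\cH(\bC)}(\{s,u\})$, which both lies in $\wh G(\bC)$ and commutes with $\xi(W_F)$; without this adjustment the two candidates for $\phi_\xi$ are a priori conjugate only by an element of the ambient group, not of $\wh G(\bC)$, so the class in $\Phi^{\SL}(G)$ is not yet well defined. The rest of your outline (the two composite checks, the transfer of semisimplicity via Lemma \ref{lem:liftss}, and of relevance) is consistent with what the paper leaves implicit.
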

\begin{proof}
Let $\xi$ be a 
Weil--Deligne L-parameter for $G$ over $\bC$. 
Let $\cH$ be the centralizer of $\xi (I_F)$ in $\wh{G}$. 
Then $\cH$ is reductive by Lemma \ref{lem:centred}. 
Take a lift $\sigma_q \in W_F$ of the $q$-th power Frobenius element. 
Then $\cH$ is stable under conjugation by 
$\xi (\sigma_q)$, since $I_F$ is a normal subgroup of $W_F$. 
Take a positive integer $m_0$ such that 
$\sigma_q^{m_0}$ commutes with $\wh{G}$ in ${}^L G$ and 
\[
 (p_{\wh{G}} \circ \xi) (0,\sigma_q^{m_0} \sigma \sigma_q^{-m_0})
 =(p_{\wh{G}} \circ \xi) (0,\sigma) 
\]
for any $\sigma \in I_F$. 
Then $(1,\sigma_q^{m_0})^{\bZ}$ is a normal subgroup of 
$\cH \cdot \xi (\sigma_q)^{\bZ} \subset {}^L G$. 
We put 
\[
 \cG = \left( \cH \cdot \xi (\sigma_q)^{\bZ} \right)/ 
 (1,\sigma_q^{m_0})^{\bZ}. 
\]
Then $\cG$ is a reductive group, because 
the identity component of $\cG$ is equal to 
the identity component of $\cH$. 
We view $\cH$ as an algebraic subgroup of $\cG$. 
Let $s$ be the image of $\xi (0,\sigma_q)$ in $\cG(\bC)$. 
Since $\xi$ is semisimple, the element $s$ is semisimple. 
Let $u \in \wh{G}(\bC)$ 
be the image of $1 \in \bG_{\mathrm{a}}(\bC)$ under $\xi$. 
Then $u$ belongs to $\cH(\bC)$. 
Hence we can view $u$ as an element of $\cG(\bC)$. 
By Lemma \ref{lem:SLmor}, 
there is 
a morphism $\theta \colon \SL_2 (\bC) \to \cG(\bC)$ 
such that 
\begin{equation}\label{eq:thecond}
 \theta \left( 
 \begin{pmatrix}
 1 & 1 \\ 
 0 & 1
 \end{pmatrix}
 \right) =u , \quad 
 \theta \left( 
 \begin{pmatrix}
 q^{\frac{1}{2}} & 0 \\ 
 0 & q^{-\frac{1}{2}} 
 \end{pmatrix}
 \right) s^{-1} \in 
 Z_{\cG(\bC)}(\Image \theta ). 
\end{equation}
Since $\theta$ factors through the identity component of 
$\cG(\bC)$, it factors through $\cH(\bC)$. 
Hence $\theta$ determines a morphism 
$\theta_{\wh{G}} \colon \SL_2(\bC) \to \wh{G}(\bC)$. 
We note that 
\[
 \theta \left( 
 \begin{pmatrix}
 q^{\frac{1}{2}} & 0 \\ 
 0 & q^{-\frac{1}{2}} 
 \end{pmatrix}
 \right) 
\]
commutes with $\xi (W_F)$, 
because it commutes with $s$ 
by the latter condition in \eqref{eq:thecond} 
and we have $\Image \theta_{\wh{G}} \subset \cH(\bC)$. 
We define 
$\phi_{\xi} \colon \SL_2(\bC) \times W_F \to {}^L G (\bC)$ 
by $\phi_{\xi}|_{\SL_2(\bC)}=\theta_{\wh{G}}$ and 
\[
 \phi_{\xi} (1,w) =\theta_{\wh{G}}\left( 
 \begin{pmatrix}
 \lvert w \rvert^{-\frac{1}{2}} & 0 \\ 
 0 & \lvert w \rvert^{\frac{1}{2}} 
 \end{pmatrix}
 \right) \xi (0,w) . 
\]
Let $\theta'$ be another choice of $\theta$. 
Then $\theta'=\Ad(g') \theta$ for some $g' \in Z_{\cG(\bC)}(\{s,u\})$ by 
Lemma \ref{lem:SLmor}. 
Since we have 
\[
 \theta \left( 
 \begin{pmatrix}
 q^{\frac{1}{2}} & 0 \\ 
 0 & q^{-\frac{1}{2}} 
 \end{pmatrix}
 \right) s^{-1} \in 
 Z_{\cG(\bC)}(\Image \theta ) \cap 
 Z_{\cG(\bC)}(\{s,u\})
\]
by \eqref{eq:thecond}, 
we may replace $g'$ and assume that 
$g' \in Z_{\cH(\bC)}(\{s,u\})$. 
Then $g'$ commutes with $\xi(W_F)$. 
Hence $[\phi_{\xi}]$ is independent of the choice of $\theta$. 
We can see that $\xi \mapsto [\phi_{\xi}]$ induces the inverse of 
$\Xi$. 
\end{proof}

\begin{rem}
The bijectivity in Proposition \ref{prop:SLWD} does not hold in general if we drop the Frobenius semisimplicity conditions from the both sides (\cf \cite[Example 3.5]{BMIYJMmor}). 
\end{rem}

\subsection{Over $\ol{\bQ}_{\ell}$} 
Let $\ell$ be a prime number different from $p$. 
Assume that $C=\ol{\bQ}_{\ell}$ in this subsection. 

\begin{defn}
\begin{enumerate}
\item 
An $\ell$-adic L-homomorphism for $G$ is 
a continuous homomorphism 
\[
 \varphi \colon W_F \to {}^L G(\ol{\bQ}_{\ell})
\] 
of groups over $W_F$. 
We say that an 
$\ell$-adic L-homomorphism 
$\varphi$ for $G$ is Frobenius-semisimple 
if $\varphi (\sigma_q)$ is semisimple for any 
lift $\sigma_q \in W_F$ of the $q$-th power Frobenius element. 
\item 
An $\ell$-adic L-parameter for $G$ is 
an Frobenius-semisimple 
relevant $\ell$-adic L-homomorphism for $G$. 
\end{enumerate}
\end{defn}

Let $\cL_{\ell}(G)$ denote the set of the equivalence classes of 
$\ell$-adic L-homomorphisms. 
Let $\Phi_{\ell}(G)$ denote the set of the equivalence classes 
of $\ell$-adic L-parameters of $G$. 

Let 
$t_{\ell} \colon I_F \to \bZ_{\ell}(1)$ 
be the $\ell$-adic tame character. 
We take an isomorphism 
$\bZ_{\ell}(1) \simeq \bZ_{\ell}$ 
and let 
\[
 t'_{\ell} \colon I_F \stackrel{t_{\ell}}{\lra} 
 \bZ_{\ell}(1) \simeq \bZ_{\ell} . 
\]
Let $\xi$ be an L-homomorphism 
of Weil--Deligne type for $G$ 
over $\ol{\bQ}_{\ell}$. 
We take a lift $\sigma_q \in W_F$ of the $q$-th power Frobenius element 
and define $\varphi_{\xi} \colon W_F \to {}^L G(\ol{\bQ}_{\ell})$ 
by 
\[
 \varphi_{\xi} (\sigma_q^m \sigma) 
 =\xi \left( t'_{\ell} (\sigma) , \sigma_q^m \sigma  \right) 
\]
for $m \in \bZ$ and $\sigma \in I_F$. 

\begin{lem}\label{lem:indepsiso}
The equivalence class $[\varphi_{\xi}] \in \cL_{\ell}(G)$ 
of $\varphi_{\xi}$ constructed above 
is independent of choices of $\sigma_q$ and an isomorphism 
$\bZ_{\ell}(1) \simeq \bZ_{\ell}$. 
\end{lem}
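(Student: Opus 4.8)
The plan is to show that two sources of ambiguity — the choice of lift $\sigma_q$ of the Frobenius and the choice of isomorphism $\bZ_\ell(1)\simeq\bZ_\ell$ — each change $\varphi_\xi$ only by $\wh G(\ol{\bQ}_\ell)$-conjugacy, in fact by $1$ in the first case and by an honest conjugation in the second. Throughout we use that $\xi$ restricted to $\bG_{\mathrm a}$ is algebraic, so for $N_\xi=\Lie(\xi|_{\bG_{\mathrm a}})(1)$ we have $\xi(a,w)=\exp(aN_\xi)\,\tau_\xi(w)$ with $\tau_\xi=\xi|_{W_F}$, and that $\Ad(\tau_\xi(w))N_\xi=\lvert w\rvert N_\xi$; in particular $N_\xi$ is nilpotent (Lemma \ref{lem:Nnilp}), so $\exp$ is a polynomial map and $a\mapsto\exp(aN_\xi)$ is a homomorphism $\bG_{\mathrm a}\to\wh G$.

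First I would treat the dependence on $\sigma_q$. Any other lift is $\sigma_q'=\sigma_q\sigma_0$ for some $\sigma_0\in I_F$. Writing a general element of $W_F$ as $(\sigma_q')^m\sigma=\sigma_q^m(\sigma_q^{-m}\sigma_0^m\sigma_q^m)\cdots$ — more cleanly, since $I_F$ is normal, every $w\in W_F$ has a unique expression $w=\sigma_q^m\sigma=(\sigma_q')^m\sigma'$ with $\sigma,\sigma'\in I_F$ and the same $m$ — one computes $\sigma'=(\sigma_q')^{-m}\sigma_q^m\sigma$. Using additivity of $t'_\ell$ on $I_F$ and the fact that conjugation by $\sigma_q$ scales $t'_\ell$ by the factor recording the action of Frobenius on $\bZ_\ell(1)$ (i.e. by $q$), one checks directly that $t'_\ell(\sigma')=t'_\ell(\sigma)$ modulo the effect absorbed into reindexing, and then that $\xi(t'_\ell(\sigma'),(\sigma_q')^m\sigma')=\xi(t'_\ell(\sigma),\sigma_q^m\sigma)$ using $\xi(a,w)=\exp(aN_\xi)\tau_\xi(w)$ together with $\tau_\xi$ being a homomorphism on $W_F$. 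So the two homomorphisms literally coincide; no conjugation is even needed. I would present this as a short explicit verification rather than grind through all the bookkeeping.

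Next, the dependence on the isomorphism $\alpha\colon\bZ_\ell(1)\xrightarrow{\sim}\bZ_\ell$. Two such differ by a unit: $\alpha'=c\alpha$ for some $c\in\bZ_\ell^\times$, hence $t'_\ell{}'=c\cdot t'_\ell$ as maps $I_F\to\bZ_\ell$. The resulting homomorphism is $\varphi'_{\xi}(\sigma_q^m\sigma)=\xi(c\cdot t'_\ell(\sigma),\sigma_q^m\sigma)=\exp(c\,t'_\ell(\sigma)N_\xi)\,\tau_\xi(\sigma_q^m\sigma)$, whereas $\varphi_\xi(\sigma_q^m\sigma)=\exp(t'_\ell(\sigma)N_\xi)\,\tau_\xi(\sigma_q^m\sigma)$. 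Now I invoke the $\SL_2$-theory: by the archimedean-free analogue available here — concretely, since $N_\xi\in\Lie(U)$ for $U$ the unipotent radical of a Borel $\wh B$ and $\tau_\xi(\sigma_q)$ normalizes things compatibly — there is a cocharacter, coming from the torus $\theta(\mathrm{diag}(t,t^{-1}))$ of an associated $\SL_2$, whose $\Ad$-action scales $N_\xi$. Explicitly, pick $\lambda\in X_*(\wh G)$ (e.g. $2\rho^\vee$ of $\wh B$, or better the one produced by applying Lemma \ref{lem:SLmor} to $s=\tau_\xi(\sigma_q)$ and $u=\exp(N_\xi)$ after passing to $\ol{\bQ}_\ell$ via $\bC\simeq\ol{\bQ}_\ell$) with $\Ad(\lambda(z))N_\xi=z^2N_\xi$. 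Then for any $c\in\bZ_\ell^\times$ we cannot in general find $z$ with $z^2=c$ inside $\bZ_\ell^\times$, so this naive approach fails — and here is the real subtlety.

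The main obstacle, and the point to argue carefully, is exactly this: an arbitrary $c\in\bZ_\ell^\times$ need not be a square, so we cannot conjugate $\varphi'_\xi$ to $\varphi_\xi$ by $\lambda(\sqrt c)$ inside $\wh G(\ol{\bQ}_\ell)$ as stated. The resolution is that we do not need $c$ to be a square: replacing the isomorphism $\bZ_\ell(1)\simeq\bZ_\ell$ changes $t'_\ell$, but the pair $(\varphi_\xi)$ only depends on $t'_\ell$ up to the action of $\bZ_\ell^\times$, and one shows the equivalence class is constant by noting that $\exp(c\,t'_\ell(\sigma)N_\xi)=\exp(t'_\ell(\sigma)\cdot(cN_\xi))$, i.e. the effect of changing the isomorphism is exactly the same as replacing the monodromy $N_\xi$ by $cN_\xi$; but $(\tau_\xi,N_\xi)$ and $(\tau_\xi,cN_\xi)$ are $\wh G(\ol{\bQ}_\ell)$-conjugate because $cN_\xi$ lies in the same $\wh G$-orbit — indeed in the same $Z_{\wh G}(\tau_\xi(W_F))^{\circ}$-orbit — as $N_\xi$ under the $\SL_2$-triple: the cocharacter $\lambda$ above has $\Ad(\lambda(z))N_\xi=z^2N_\xi$, and since we are working over the algebraically closed field $\ol{\bQ}_\ell$ every $c\in\ol{\bQ}_\ell^\times$ (a fortiori every $c\in\bZ_\ell^\times$) \emph{is} a square in $\ol{\bQ}_\ell^\times$. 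That last observation — conjugation is allowed by $\wh G(\ol{\bQ}_\ell)$, not $\wh G(\bZ_\ell)$, and $\ol{\bQ}_\ell$ is algebraically closed so $\sqrt c$ exists — is what makes the argument go through. So the final step is: set $\lambda$ as produced by Lemma \ref{lem:SLmor} (transported along $\bC\simeq\ol{\bQ}_\ell$, or constructed directly), pick $z\in\ol{\bQ}_\ell^\times$ with $z^2=c$, and verify $\Ad(\lambda(z))\circ\varphi'_\xi=\varphi_\xi$ by checking it on $I_F$ (where it uses $\Ad(\lambda(z))N_\xi=cN_\xi$) and on a Frobenius lift (where it uses that $\lambda(z)$ commutes with $\tau_\xi(\sigma_q)$, which holds because $\lambda$ lands in the centralizer of $\tau_\xi(W_F)$ by construction), noting that $\lambda(z)\in\wh G(\ol{\bQ}_\ell)$ so this is a genuine equivalence. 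Combining with the $\sigma_q$-independence already shown, $[\varphi_\xi]\in\cL_\ell(G)$ is well-defined.
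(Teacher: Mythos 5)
Your first step --- independence of the choice of $\sigma_q$ --- contains a genuine error. Write $\sigma_q'=\sigma_q\sigma_0$ with $\sigma_0\in I_F$. In the two decompositions $w=\sigma_q^m\sigma=(\sigma_q')^m\sigma'$ one has, already for $w=\sigma_q$ itself, $\sigma=1$ but $\sigma'=\sigma_0^{-1}$, so that $\varphi_{\xi}(\sigma_q)=\xi(0,\sigma_q)$ while $\varphi'_{\xi}(\sigma_q)=\xi(-t'_{\ell}(\sigma_0),\sigma_q)$. These differ by $\exp(-t'_{\ell}(\sigma_0)N_{\xi})$, which is nontrivial whenever $N_{\xi}\neq 0$ and $t'_{\ell}(\sigma_0)\neq 0$ (and $t'_{\ell}$ is surjective, so such $\sigma_0$ exist). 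Hence your claim that ``the two homomorphisms literally coincide; no conjugation is even needed'' is false, and the assertion $t'_{\ell}(\sigma')=t'_{\ell}(\sigma)$ ``modulo reindexing'' does not hold. A nontrivial conjugation is unavoidable: the paper conjugates by $g=\xi\bigl(t'_{\ell}(\sigma_0)/(q-1),1\bigr)$, whose commutator with $\xi(0,\sigma_q)$ (computed from the Weil--Deligne relation $\Ad(\tau(w))N=\lvert w\rvert N$, which is where the denominator $q-1$ comes from) produces exactly the discrepancy $\xi(-t'_{\ell}(\sigma_0),\,\cdot\,)$ on Frobenius while fixing $\varphi_{\xi}|_{I_F}$.

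For the second step your reduction is sound: changing the isomorphism $\bZ_{\ell}(1)\simeq\bZ_{\ell}$ by $u\in\bZ_{\ell}^{\times}$ amounts to replacing $N_{\xi}$ by $uN_{\xi}$, and one needs an element of $\wh{G}(\ol{\bQ}_{\ell})$ centralizing $\xi(W_F)$ and scaling $N_{\xi}$ by $u$. But your proposed source for such an element --- Lemma \ref{lem:SLmor}, or an $\SL_2$-triple through $N_{\xi}$ --- is not available here: that lemma requires the Frobenius image to be semisimple, whereas the present lemma concerns arbitrary L-homomorphisms in $\cL_{\ell}(G)$; and a generic cocharacter such as $2\rho^{\vee}$ of a Borel will not centralize $\tau_{\xi}(W_F)$. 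The paper sidesteps this entirely: letting $\cH$ be the intersection of the normalizer of $U_{\xi}=\xi(\bG_{\mathrm{a}}(\ol{\bQ}_{\ell}))$ with the centralizer of $\xi(W_F)$ in $\wh{G}$, it checks that $h_0=(p_{\wh{G}}\circ\xi)(0,\sigma_q^{m_0})$ lies in $\cH(\ol{\bQ}_{\ell})$ and acts on $U_{\xi}$ by the infinite-order scalar $q^{m_0}$, so the algebraic homomorphism $\cH\to\Aut(U_{\xi})\simeq\bG_{\mathrm{m}}$ is surjective, and any preimage $h$ of $u$ gives the required conjugation. You should either adopt an argument of this kind or restrict your $\SL_2$-construction to the semisimple case and handle general $\xi$ separately; as written, both halves of your proof have gaps, the first one fatal.
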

\begin{proof}
Let $\sigma_q'$ be another choice of a lift of the 
$q$-th power Frobenius element. 
Then $\sigma_q'=\sigma_q \sigma'$ for some $\sigma' \in I_F$. 
We define $\varphi_{\xi}'$ similarly as $\varphi_{\xi}$ 
using $\sigma_q'$ instead of $\sigma_q$. 
We put 
\[
 g=\xi \left( \frac{t_{\ell}'(\sigma')}{q-1},1 \right). 
\]
Then we have 
\[
 \Ad (g) (\varphi_{\xi}(\sigma)) =\xi(t_{\ell}'(\sigma),\sigma)
 =\varphi_{\xi}'(\sigma) 
\]
for $\sigma \in I_F$, and 
\[
 \Ad (g) (\varphi_{\xi}(\sigma_q)) 
 =g \left( \Ad (\varphi_{\xi}(\sigma_q))(g^{-1}) \right) 
 \varphi_{\xi}(\sigma_q) 
 =\xi (-t_{\ell}'(\sigma'),\sigma_q) 
 = \varphi_{\xi}'(\sigma_q). 
\]
Hence we have $[\varphi_{\xi}]=[\varphi_{\xi}']$. 

Let 
\[
 t''_{\ell} \colon I_F \stackrel{t_{\ell}}{\lra} 
 \bZ_{\ell}(1) \simeq \bZ_{\ell} 
\]
be a homomorphism obtained from another choice of 
an isomorphism $\bZ_{\ell}(1) \simeq \bZ_{\ell}$. 
Then we have 
$t_{\ell}''=u t_{\ell}'$ for some $u \in \bZ_{\ell}^{\times}$. 
Take a positive integer $m_0$ such that 
$\sigma_q^{m_0}$ commutes with $\wh{G}$ in ${}^L G$ and 
\begin{equation}\label{eq:m0cond}
 (p_{\wh{G}} \circ \xi) (0,\sigma_q^{m_0} \sigma \sigma_q^{-m_0})
 =(p_{\wh{G}} \circ \xi) (0,\sigma)
\end{equation}
for any $\sigma \in I_F$. 
We put $h_0=(p_{\wh{G}} \circ \xi) (0,\sigma_q^{m_0})$. 
If $\xi|_{\bG_{\mathrm{a}}(\ol{\bQ}_{\ell})}$ 
is trivial, there is nothing to prove. 
Hence we assume that $\xi|_{\bG_{\mathrm{a}}(\ol{\bQ}_{\ell})}$ 
is non-trivial. 
Let $U_{\xi}$ be the algebraic subgroup of $\wh{G}$ defined by 
$\xi (\bG_{\mathrm{a}}(\ol{\bQ}_{\ell}))$. 
Let $\cH$ be the intersection of 
the normalizer of $U_{\xi}$ in $\wh{G}$ 
and 
the centralizer of $\xi(W_F)$ in $\wh{G}$. 
We have 
\[ 
\Ad (h_0)(\xi(a,1))=\Ad (\xi (\sigma_q^{m_0}))(\xi(a,1))=\xi(q^{m_0} a,1) 
\]
for $a \in \bG_{\mathrm{a}}(\ol{\bQ}_{\ell})$ and 
\begin{align*}
 \Ad (h_0)(\xi(0,\sigma_q))
 &=\Ad ((1,\sigma_q^{-m_0})) \Ad (\xi(0,\sigma_q^{m_0})) 
 (\xi(0,\sigma_q)) = \xi(0,\sigma_q),\\ 
 \Ad (h_0)(\xi(0,\sigma))
 &=\Ad ((1,\sigma_q^{-m_0})) (\xi(0,\sigma_q^{m_0} \sigma \sigma_q^{-m_0})) 
 =\xi(0,\sigma)
\end{align*}
for $\sigma \in I_F$ using \eqref{eq:m0cond}. 
Hence we have $h_0 \in \cH(\ol{\bQ}_{\ell})$. 
The morphism 
\[
 f \colon \cH \to \Aut (U_{\xi}) 
 \simeq \bG_{\mathrm{m}} 
\]
induced by the adjoint action 
is surjective, because $f (h_0)=q^{m_0}$ is not of finite order. 
Hence we can take $h \in \cH(\ol{\bQ}_{\ell})$ 
such that $f (h)=u$. 
Then we have 
\[
 \Ad (h) \left( \xi \left( t'_{\ell} (\sigma) , \sigma_q^m \sigma 
 \right) \right) 
 = \left( t''_{\ell} (\sigma) , \sigma_q^m \sigma  \right) 
\]
for $m \in \bZ$ and $\sigma \in I_F$. 
Therefore 
$[\varphi_{\xi}]$ 
is independent of a choice an isomorphism 
$\bZ_{\ell}(1) \simeq \bZ_{\ell}$. 
\end{proof}

We define 
$\Theta \colon 
 \cL_{\ol{\bQ}_{\ell}}^{\mathrm{WD}}(G) \to \cL_{\ell}(G)$ 
by $\Theta ([\xi])=[\varphi_{\xi}]$. 

\begin{prop}\label{prop:PhiWDell}
The map $\Theta$ is a bijection. 
Further it induces a bijection 
$\Phi_{\ol{\bQ}_{\ell}}^{\mathrm{WD}}(G) \to \Phi_{\ell}(G)$. 
\end{prop}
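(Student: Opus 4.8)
The plan is to recognise $\Theta$ as the ${}^{L}G$-valued form of the classical passage between continuous $\ell$-adic representations of $W_F$ and Weil--Deligne representations — Grothendieck's $\ell$-adic monodromy theorem — and then to check separately that it carries the semisimplicity and relevance conditions on one side to those on the other. Composing with the bijection $\cL_{\ol{\bQ}_\ell}^{\mathrm{WD}}(G)\simeq\cL_{\ol{\bQ}_\ell}^{\mathrm{M}}(G)$ already established, and with $\xi_{(\tau,N)}(a,w)=\exp(aN)\tau(w)$, I would reformulate $\Theta$ as the assignment taking the class of a Weil--Deligne L-homomorphism $(\tau,N)$ to the class of the $\ell$-adic L-homomorphism $w=\sigma_q^{m}\sigma\mapsto\exp(t'_\ell(\sigma)N)\tau(w)$ (up to the left/right bookkeeping forced by the normalisations of $\mathit{WD}_F$, $t'_\ell$ and $\lvert\cdot\rvert$). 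It then suffices to show this assignment $\cL_{\ol{\bQ}_\ell}^{\mathrm{M}}(G)\to\cL_\ell(G)$ is a bijection which restricts to a bijection $\Phi_{\ol{\bQ}_\ell}^{\mathrm{M}}(G)\to\Phi_\ell(G)$. That $\varphi_\xi$ already lands in $\cL_\ell(G)$ is clear: it is continuous because $t'_\ell$ is continuous, $\xi|_{\bG_{\mathrm{a}}}$ algebraic and $(p_{\wh G}\circ\xi)|_{W_F}$ smooth, and its class is independent of all auxiliary choices by Lemma~\ref{lem:indepsiso}.

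For the inverse I would carry out Grothendieck's construction with ${}^{L}G$-coefficients. Fix a finite Galois $F'/F$ splitting $G$, so an $\ell$-adic L-homomorphism $\varphi$ induces a continuous homomorphism $\ol{\varphi}\colon W_F\to\wh G(\ol{\bQ}_\ell)\rtimes\Gal(F'/F)$, and choose a faithful algebraic representation $r$ of the linear algebraic group $\wh G\rtimes\Gal(F'/F)$. Since $\ell\neq p$, the pro-$p$ group $\ol{\varphi}(P_F)$ has finite image in $\GL_n(\ol{\bQ}_\ell)$ (a pro-$p$ subgroup of a maximal compact subgroup injects into $\GL_n$ of a finite field), so applying the $\ell$-adic monodromy theorem to $r\circ\ol{\varphi}$, one obtains a $\sigma_q$-stable open subgroup $I_1$ of $I_F$, which we may take inside $I_{F'}$, and a nilpotent $N_r$ with $r(\ol{\varphi}(\sigma))=\exp(t'_\ell(\sigma)N_r)$ for $\sigma\in I_1$; as these elements lie in the closed subgroup $r(\wh G(\ol{\bQ}_\ell))$, so does the one-parameter unipotent subgroup they span, whence $N_r\in\Lie(r(\wh G))$ and $N:=r_*^{-1}(N_r)\in\Lie(\wh G)(\ol{\bQ}_\ell)$, and by faithfulness $\varphi(\sigma)=(\exp(t'_\ell(\sigma)N),\sigma)$ on $I_1$. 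I would then define $\tau$ by stripping this tame monodromy off $\varphi$, and verify, exactly as for $\GL_n$: that the $\sigma_q$-equivariance of $t'_\ell$ on $I_1$ forces $\Ad(\varphi(\sigma_q))N=\lvert\sigma_q\rvert N$, which combined with the triviality of the $I_F$-action on $I_F^{\mathrm{tame}}$ gives $\Ad(\varphi(w))N=\lvert w\rvert N$ for all $w$; that $\tau$ is then a homomorphism over $W_F$ whose $\wh G$-projection is smooth, being the canonical section on $I_1$; that $N\in\Lie(\wh{G}^{\mathrm{der}})$ is nilpotent by Lemma~\ref{lem:Nnilp}; and that $\varphi_{\xi_{(\tau,N)}}=\varphi$. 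Applying the same recipe to $g\varphi g^{-1}$ for $g\in\wh G(\ol{\bQ}_\ell)$ shows it is $\wh G(\ol{\bQ}_\ell)$-equivariant, hence descends to equivalence classes and is a two-sided inverse of $\Theta$; so $\Theta$ is a bijection.

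It remains to match the two conditions defining parameters. Because the inertia part of $\sigma_q$ is trivial, $\varphi_\xi(\sigma_q)=\tau(\sigma_q)$, so Lemma~\ref{lem:liftss} applied to the smooth map $p_{\wh G}\circ\tau$ gives that $\tau$ is semisimple iff $\varphi_\xi(\sigma_q)$ is semisimple; and ``for this lift'' can be promoted to ``for every lift'', hence to ``$\varphi_\xi$ is Frobenius-semisimple'', using Lemma~\ref{lem:indepsiso}, since another lift $\sigma_q'$ produces a $\wh G(\ol{\bQ}_\ell)$-conjugate of $\varphi_\xi$ whose value at $\sigma_q'$ lies in $\tau(W_F)$ and is therefore semisimple when $\tau$ is. For relevance I would show that a parabolic subgroup $P$ of ${}^{L}G$ contains $\Image\xi$ if and only if it contains $\Image\varphi_\xi$: one direction is immediate since $\Image\varphi_\xi\subseteq\Image\xi$; conversely, if $P$ contains $\Image\varphi_\xi$ it contains $\exp(t'_\ell(\sigma)N)$ for all $\sigma\in I_1$, and since $t'_\ell(I_1)$ is an infinite subgroup of $\bZ_\ell$ and hence Zariski dense in $\bG_{\mathrm{a}}$, the closed subgroup $P$ contains all of $\exp(\bG_{\mathrm{a}}N)$, i.e.\ $N\in\Lie(P\cap\wh G)$, and therefore also $\tau(W_F)$ and $\Image\xi$. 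Thus the parabolics over $\Image\xi$ are precisely those containing $\tau(W_F)$ with $N\in\Lie(P\cap\wh G)$, and also precisely those over $\Image\varphi_\xi$, so the three relevance conditions coincide; together with the semisimplicity equivalence this yields the bijection $\Phi_{\ol{\bQ}_\ell}^{\mathrm{WD}}(G)\to\Phi_\ell(G)$.

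The crux is the second paragraph: running Grothendieck's $\ell$-adic monodromy theorem in ${}^{L}G$-coefficients — producing the monodromy operator inside $\Lie(\wh G)$ via a faithful representation of $\wh G\rtimes\Gal(F'/F)$, and verifying that the de-monodromised cocycle is a genuine homomorphism over $W_F$ with smooth $\wh G$-projection and the right twisting relation $\Ad(\tau(w))N=\lvert w\rvert N$ — while keeping the normalisations of $t'_\ell$, $\lvert\cdot\rvert$ and the $W_F$-action on $\bG_{\mathrm{a}}$ straight. The subsequent steps are then formal.
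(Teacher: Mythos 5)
Your proposal follows essentially the same route as the paper: apply Grothendieck's $\ell$-adic monodromy theorem through a faithful algebraic representation of $\wh{G}(\ol{\bQ}_{\ell})\rtimes\Gal(F'/F)$, use Zariski density of $t'_{\ell}$ on a small inertia subgroup to land the monodromy inside $\wh{G}$, strip it off to construct the inverse of $\Theta$, and invoke Lemmas \ref{lem:liftss} and \ref{lem:indepsiso} for the passage to parameters. Your extra paragraph matching the relevance conditions fills in a detail the paper leaves implicit, but the overall argument is the same.
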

\begin{proof}
Let $\varphi$ be an $\ell$-adic L-homomorphism for $G$. 
Take a finite Galois extension $F'$ of $F$ such that $G$ splits over $F'$. 
Take a representation 
\[
 \eta \colon {}^L G (\ol{\bQ}_{\ell}) \to \GL_n (\ol{\bQ}_{\ell}) 
\]
which factors through a faithful algebraic representation 
\[
 \ol{\eta} \colon \wh{G}(\ol{\bQ}_{\ell}) \rtimes \Gal (F'/F) 
 \to \GL_n (\ol{\bQ}_{\ell}). 
\]
Applying Grothendieck's monodromy theorem (\cf \cite[Appendix, Proposition]{SeTaGred}) to $\eta \circ \varphi$, 
we obtain a homomorphism 
\[
 \xi_{\GL_n} \colon \mathit{WD}_F(\ol{\bQ}_{\ell}) \to 
 \GL_n (\ol{\bQ}_{\ell})
\] 
such that 
$\xi_{\GL_n}|_{\bG_{\mathrm{a}}(\ol{\bQ}_{\ell})}$ is algebraic, 
$\xi_{\GL_n}|_{W_F}$ is smooth and 
\[
 \xi_{\GL_n} \left( t_{\ell}' (\sigma) , \sigma_q^m \sigma  \right) 
 = (\eta \circ \varphi) (\sigma_q^m \sigma) 
\]
for $m \in \bZ$ and $\sigma \in I_F$. 
Take a finite separable extension $F''$ of $F'$ such that 
$\xi_{\GL_n}|_{I_{F''}}$ is trivial. 
Since $t'_{\ell}(I_{F''})$ is Zariski dense in 
$\bG_{\mathrm{a}}(\ol{\bQ}_{\ell})$, 
the algebraic morphism 
$\xi_{\GL_n}|_{\bG_{\mathrm{a}}(\ol{\bQ}_{\ell})}$ 
factors through 
the inclusion  
\[
 \wh{G}(\ol{\bQ}_{\ell}) \hookrightarrow 
 \wh{G}(\ol{\bQ}_{\ell}) \rtimes \Gal (F'/F) 
 \stackrel{\ol{\eta}}{\hookrightarrow} 
 \GL_n (\ol{\bQ}_{\ell})
\]
via an algebraic morphism 
$\alpha \colon \bG_{\mathrm{a}}(\ol{\bQ}_{\ell}) \to \wh{G}(\ol{\bQ}_{\ell})$. 
We define a homomorphism 
\[
 \xi_{\varphi} \colon \mathit{WD}_F(\ol{\bQ}_{\ell}) \to 
 {}^L G (\ol{\bQ}_{\ell}) 
\]
by 
\[
 \xi_{\varphi} \left(a , \sigma_q^m \sigma  \right) 
 =\alpha (a - t'_{\ell}(\sigma)) \varphi (\sigma_q^m \sigma) 
\]
for $a \in \bG_{\mathrm{a}}(\ol{\bQ}_{\ell})$, 
$m \in \bZ$ and $\sigma \in I_F$. 
Then $\varphi \mapsto \xi_{\varphi}$ induces the inverse of $\Theta$. 
The bijection $\Theta$ 
induces a bijection 
$\Phi_{\ol{\bQ}_{\ell}}^{\mathrm{WD}}(G) \to \Phi_{\ell}(G)$ 
by Lemma \ref{lem:liftss} and Lemma \ref{lem:indepsiso}. 
\end{proof}

\begin{cor}
Let $\sigma_q \in W_F$ be a 
lift  of the $q$-th power Frobenius element. 
Then an 
$\ell$-adic L-homomorphism 
$\varphi$ for $G$ is Frobenius-semisimple 
if $\varphi (\sigma_q)$ is semisimple. 
\end{cor}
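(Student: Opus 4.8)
The plan is to deduce this from Proposition~\ref{prop:PhiWDell} together with Lemma~\ref{lem:liftss}. Fix a lift $\sigma_q \in W_F$ of the $q$-th power Frobenius element for which $\varphi(\sigma_q)$ is semisimple, and fix an isomorphism $\bZ_{\ell}(1) \simeq \bZ_{\ell}$, so that $\Theta$ and the assignment $\xi \mapsto \varphi_{\xi}$ built from $\sigma_q$ are defined. Since semisimplicity in ${}^L G(\ol{\bQ}_{\ell})$ is invariant under $\wh{G}(\ol{\bQ}_{\ell})$-conjugation, replacing $\varphi$ by an equivalent $\ell$-adic L-homomorphism affects neither hypothesis nor conclusion; as $\Theta$ is bijective, we may therefore assume $\varphi = \varphi_{\xi}$ for some L-homomorphism $\xi$ of Weil--Deligne type for $G$ over $\ol{\bQ}_{\ell}$. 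Writing $(\tau_{\xi}, N_{\xi})$ for the associated Weil--Deligne L-homomorphism, so that $\xi(a,w) = \exp(a N_{\xi}) \tau_{\xi}(w)$, we get $\varphi(\sigma_q) = \varphi_{\xi}(\sigma_q) = \xi(0, \sigma_q) = \tau_{\xi}(\sigma_q)$, whence $\tau_{\xi}(\sigma_q)$ is semisimple.

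First I would apply Lemma~\ref{lem:liftss} to the homomorphism $\tau_{\xi} \colon W_F \to {}^L G(\ol{\bQ}_{\ell})$, whose composite with $p_{\wh{G}}$ is smooth: semisimplicity of $\tau_{\xi}(\sigma_q)$ then forces $\tau_{\xi}$ to be semisimple, i.e.\ every element of $\tau_{\xi}(W_F)$ is semisimple.

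Next, let $\sigma_q'$ be an arbitrary lift of the $q$-th power Frobenius element, and write $\sigma_q' = \sigma_q \sigma'$ with $\sigma' \in I_F$. Repeating the construction preceding Lemma~\ref{lem:indepsiso} with $\sigma_q'$ in place of $\sigma_q$ produces an $\ell$-adic L-homomorphism $\varphi_{\xi}'$ with $\varphi_{\xi}'(\sigma_q') = \xi(0, \sigma_q') = \tau_{\xi}(\sigma_q')$. By the proof of Lemma~\ref{lem:indepsiso} there is $g \in \wh{G}(\ol{\bQ}_{\ell})$ with $\Ad(g) \circ \varphi_{\xi} = \varphi_{\xi}'$ on $I_F$ and on $\sigma_q$, hence on all of $W_F$; in particular $\varphi(\sigma_q') = \varphi_{\xi}(\sigma_q')$ is $\wh{G}(\ol{\bQ}_{\ell})$-conjugate to $\varphi_{\xi}'(\sigma_q') = \tau_{\xi}(\sigma_q')$. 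The latter is semisimple by the previous paragraph, and semisimplicity in ${}^L G(\ol{\bQ}_{\ell})$ is conjugation-invariant, so $\varphi(\sigma_q')$ is semisimple. As $\sigma_q'$ was arbitrary, $\varphi$ is Frobenius-semisimple.

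The one place requiring care is the bookkeeping in the last step: one must check that the conjugacy furnished by Lemma~\ref{lem:indepsiso} identifies $\varphi_{\xi}(\sigma_q')$ with $\tau_{\xi}(\sigma_q') = \xi(0, \sigma_q')$ rather than with $\xi(t'_{\ell}(\sigma'), \sigma_q\sigma')$, since it is the former that is visibly semisimple once $\tau_{\xi}$ is known to be semisimple. I do not expect any deeper obstacle. Alternatively, one can dispense with Lemma~\ref{lem:indepsiso} and argue directly: $\varphi(\sigma_q') = \exp\!\big(t'_{\ell}(\sigma') N_{\xi}\big)\,\tau_{\xi}(\sigma_q \sigma')$, and the relation $\Ad(\tau_{\xi}(\sigma_q\sigma'))\,N_{\xi} = \lvert \sigma_q \rvert\, N_{\xi}$ with $\lvert \sigma_q \rvert \neq 1$ lets one conjugate this element by $\exp(b N_{\xi})$, for a suitable scalar $b$, into $\tau_{\xi}(\sigma_q\sigma')$, which is semisimple.
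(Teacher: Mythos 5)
Your proof is correct and follows essentially the same route as the paper's own argument: transfer to a Weil--Deligne type L-homomorphism $\xi$ via Proposition~\ref{prop:PhiWDell}, apply Lemma~\ref{lem:liftss} to see that $\tau_{\xi} = \xi|_{W_F}$ is semisimple, and then use the independence statement of Lemma~\ref{lem:indepsiso} to conclude that $\varphi(\sigma_q')$ is $\wh{G}(\ol{\bQ}_{\ell})$-conjugate to the semisimple element $\xi(0,\sigma_q')$ for an arbitrary lift $\sigma_q'$. The bookkeeping concern you flag at the end is handled exactly as you describe, and your alternative computation with $\exp(bN_\xi)$ is just the explicit conjugating element in the proof of Lemma~\ref{lem:indepsiso}, so there is no real departure from the paper.
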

\begin{proof}
By Proposition \ref{prop:PhiWDell}, 
we take an L-homomorphism $\xi$ 
of Weil--Deligne type for $G$ 
over $\ol{\bQ}_{\ell}$ such that 
$[\varphi]=[\varphi_{\xi}]$, where 
$\varphi_{\xi}$ is defined using $\sigma_q$. 
Then $\xi|_{W_F}$ is semisimple by Lemma \ref{lem:liftss}, 
because $\xi(0,\sigma_q)=\varphi_{\xi}(\sigma_q)$ is semisimple. 
Let $\sigma_q' \in W_F$ be 
another lift of the $q$-th power Frobenius element. 
We define $\varphi_{\xi}'$ using $\sigma_q'$. 
Then $[\varphi]=[\varphi_{\xi}']$ by Lemma \ref{lem:indepsiso}. 
Hence $\varphi(\sigma_q')$ is semisimple, 
because $\varphi_{\xi}'(\sigma_q')=\xi(0,\sigma_q')$ is semisimple. 
\end{proof}

\section{Local Langlands correspondence}\label{sec:LLC}

\subsection{Problem}

Let $\Irr (G(F))$ denote the set of isomorphism classes of 
irreducible smooth representations of $G(F)$ over $\bC$. 
The conjectured local Langlands correspondence is a surjective map 
\[
 \mathrm{LL}_G \colon \Irr (G(F)) \to \Phi (G) 
\]
with finite fibers satisfying 
various properties 
(\cf \cite[10]{BorAutL}, \cite[Conjecture G]{KalLLCnqs}). 
We assume the existence of the 
local Langlands correspondence in the sequel. 

Let $\Irr_{\ell} (G(F))$ denote the set of isomorphism classes of 
irreducible smooth representations of $G(F)$ over $\ol{\bQ}_{\ell}$. 
If we fix an isomorphism $\iota \colon \bC \stackrel{\sim}{\to} \ol{\bQ}_{\ell}$, 
we have a surjection 
\[
 \mathrm{LL}_{G,\ell}^{\iota} \colon \Irr_{\ell} (G(F)) \to \Phi_{\ell} (G) 
\]
sending $[\pi] \in \Irr_{\ell} (G(F))$ to 
the image of 
$\mathrm{LL}_G ([\pi \otimes_{\ol{\bQ}_{\ell},\iota^{-1}} \bC])$ 
under the bijection 
\[
 \Phi (G) \simeq \Phi^{\SL} (G) \simeq \Phi_{\bC}^{\mathrm{WD}} (G) 
 \stackrel{\Phi_{\iota}^{\mathrm{WD}} (G)}{\simeq} 
 \Phi_{\ol{\bQ}_{\ell}}^{\mathrm{WD}} (G) 
 \simeq \Phi_{\ell} (G), 
\]
where $\Phi_{\iota}^{\mathrm{WD}} (G)$ is a bijection induced by $\iota$ 
as in Remark \ref{rem:iotaind}. 
However, $\mathrm{LL}_{G,\ell}^{\iota}$ depends on the choice of 
an isomorphism $\iota \colon \bC \stackrel{\sim}{\to} \ol{\bQ}_{\ell}$. 

In \cite[35.1]{BHLLCGL2}, an $\ell$-adic local Langlands correspondence 
for $\GL_2$ 
is constructed. We recall the construction here. 
For $[\phi] \in \Phi (\GL_2)$, 
we define $[\wt{\phi}] \in \Phi (\GL_2)$ 
by 
\[
 \wt{\phi}(g,w)=\left( 
 \begin{pmatrix}
 \lvert w \rvert^{\frac{1}{2}} & 0 \\ 
 0 & \lvert w \rvert^{\frac{1}{2}} 
 \end{pmatrix},
 1 \right) \phi(g,w) . 
\]
We define a bijection 
$\Lambda \colon \Phi (\GL_2) \to \Phi (\GL_2)$ 
by $[\phi] \mapsto [\wt{\phi}]$. 
We define 
\[
 \mathrm{LL}_{\GL_2,\ell} \colon 
 \Irr_{\ell} (\GL_2(F)) \to \Phi_{\ell} (\GL_2) 
\]
by sending $[\pi] \in \Irr_{\ell} (\GL_2(F))$ to 
the image of 
$\mathrm{LL}_{\GL_2}([\pi \otimes_{\ol{\bQ}_{\ell},\iota^{-1}} \bC])$ 
under the bijection 
\[
 \Phi (\GL_2) \stackrel{\Lambda}{\simeq} \Phi (\GL_2) \simeq 
 \Phi^{\SL} (\GL_2) \simeq 
 \Phi_{\bC}^{\mathrm{WD}} (\GL_2) 
 \stackrel{\Phi_{\iota}^{\mathrm{WD}} (\GL_2)}{\simeq} \Phi_{\ol{\bQ}_{\ell}}^{\mathrm{WD}} (\GL_2) 
 \simeq \Phi_{\ell} (\GL_2) 
\]
using an isomorphism 
$\iota \colon \bC \stackrel{\sim}{\to} \ol{\bQ}_{\ell}$. 
Then $\mathrm{LL}_{\GL_2,\ell}$ is independent of 
the choice of $\iota$. 

We can not make a similar twist for an L-parameter of 
a general connected reductive group $G$ 
as the following example suggests. 

\begin{eg}\label{eg:PGL}
We have a commutative diagram 
\[
 \xymatrix{
 \Irr (\PGL_2(F)) 
 \ar[rr]^-{\mathrm{LL}_{\PGL_2}} \ar@{^{(}->}[d] & & 
 \Phi (\PGL_2) \ar@{^{(}->}[d] \\ 
 \Irr (\GL_2(F)) 
 \ar[rr]^-{\mathrm{LL}_{\GL_2}} & & 
 \Phi (\GL_2) 
 }
\]
by functoriality. 
On the other hand, there does not exist a map 
\[
 \mathrm{LL}_{\PGL_2,\ell} \colon 
 \Irr_{\ell} (\PGL_2(F)) \to 
 \Phi_{\ell} (\PGL_2) 
\]
which makes the commutative diagram 
\[
 \xymatrix{
 \Irr_{\ell} (\PGL_2(F)) 
 \ar[rr]^-{\mathrm{LL}_{\PGL_2,\ell}} \ar@{^{(}->}[d] & & 
 \Phi_{\ell} (\PGL_2) \ar@{^{(}->}[d] \\ 
 \Irr_{\ell} (\GL_2(F)) 
 \ar[rr]^-{\mathrm{LL}_{\GL_2,\ell}} & & 
 \Phi_{\ell} (\GL_2), 
 }
\]
because 
$(\det \circ p_{\wh{\GL_2}} \circ \varphi) (w)=\lvert w \rvert$ 
for $w \in W_F$ 
if $[\varphi] \in \Phi_{\ell} (\GL_2)$ is the image under 
$\mathrm{LL}_{\GL_2,\ell}$ of an element 
of $\Irr_{\ell} (\GL_2(F))$ 
coming from 
$\Irr_{\ell} (\PGL_2(F))$ by the construction of $\mathrm{LL}_{\GL_2,\ell}$ and \cite[10.1]{BorAutL}. 
\end{eg}

\subsection{$\ell$-adic C-parameter}
A C-group is constructed in 
\cite[Definition 5.38]{BuGeconjc} 
for a connected reductive group over a number field. 
We recall the construction here in our setting. 
Let $G^{\mathrm{ad}}$ be the adjoint quotient of $G$, 
and $G^{\mathrm{sc}}$ be the simply-connected cover of $G^{\mathrm{ad}}$. 
Let $\gamma \colon Z(G^{\mathrm{sc}}) \to \bG_{\mathrm{m}}$ 
be the restriction to $Z(G^{\mathrm{sc}})$ of 
the half sum of the positive roots of $G^{\mathrm{sc}}$, 
where we take a maximal torus $T^{\mathrm{sc}}$ and 
a Borel subgroup $B^{\mathrm{sc}}$ of $G^{\mathrm{sc}}_{\ol{F}}$ 
such that $T^{\mathrm{sc}} \subset B^{\mathrm{sc}}$ 
to define the positive roots, 
but $\gamma$ is independent of the choice. 
By pushing forward the exact sequence 
\begin{equation*}\label{eq:exGsc}
1 \to Z(G^{\mathrm{sc}}) \to G^{\mathrm{sc}} \to G^{\mathrm{ad}} \to 1 
\end{equation*}
by $\gamma$, 
we obtain an extension 
\begin{equation*}\label{eq:exG1}
1 \to \bG_{\mathrm{m}} \to G^1 \to G^{\mathrm{ad}} \to 1. 
\end{equation*}
By taking the pullback of this extension along 
the natural morphism $G \to G^{\mathrm{ad}}$, 
we obtain an extension 
\begin{equation*}\label{eq:exwtG}
1 \to \bG_{\mathrm{m}} \to \wt{G} \to G \to 1. 
\end{equation*}
We define the C-group 
${}^C G$ of $G$ as the L-group ${}^L \wt{G}$ 
of $\wt{G}$. 

The character 
\[
 G^{\mathrm{sc}} \times \bG_{\mathrm{m}} \to \bG_{\mathrm{m}};\ 
 (g,z) \mapsto z^2 
\]
induces a character $G^1 \to \bG_{\mathrm{m}}$, 
since $\gamma^2 =1$ by the construction of $\gamma$. 
It further induces a character 
$\wt{G} \to \bG_{\mathrm{m}}$ by taking composition with 
the natural morphism $\wt{G} \to G^1$. 
Hence we obtain a morphism 
\begin{equation}\label{wtGGGm}
 \wt{G} \to G \times \bG_{\mathrm{m}}. 
\end{equation}

We take a Borel subgroup 
$B \subset G_{\ol{F}}$ and 
a maximal torus $T \subset B$ over $\ol{F}$. 
Let $\rho_G$ denote the half sum of positive roots of $G$ 
with respect to $T$ and $B$. 
Then $2\rho_G$ defines a cocharacter 
$\delta_G \colon \bG_{\mathrm{m}} \to \wh{T}$. 
We put 
\[
 z_G =\delta_G (-1). 
\]
Then $z_G$ is central in $\wh{G}$ and independent of choices of 
$B$ and $T$ as in \cite[Proposition 5.39]{BuGeconjc}. 
By the independent of choices, we see that 
$z_G \in Z(\wh{G})^{\Gamma_F} \subset {}^L G$. 

Then the morphism \eqref{wtGGGm} induces the isomorphism 
\begin{equation}\label{eq:strCG}
 (\wh{G} \times \bG_{\mathrm{m}} 
 /\langle (z_G,-1) \rangle ) \rtimes W_F \simeq {}^C G 
\end{equation}
as in \cite[Proposition 5.39]{BuGeconjc}. 
We sometimes express a point of 
${}^C G$ as $[(g,z,w)]$ using the above isomorphism. 
We define 
$t_{\bG_{\mathrm{m}}} \colon {}^C G \to \bG_{\mathrm{m}}$ 
by $t_{\bG_{\mathrm{m}}}([(g,z,w)])=z^2$. 
We have an exact sequence 
\[
 1 \to {}^L G \to {}^C G 
 \stackrel{t_{\bG_{\mathrm{m}}}}{\lra} \bG_{\mathrm{m}} \to 1. 
\]

\begin{defn}\label{def:ellC}
An $\ell$-adic C-parameter for $G$ 
is an $\ell$-adic L-parameter $\varphi$ for $\wt{G}$ 
such that 
$(t_{\bG_{\mathrm{m}}} \circ \varphi) (w)=\lvert w \rvert$. 
\end{defn}

Let $\Phi_{\ell}^{\mathrm{C}}(G)$ denote the 
equivalence classes of $\ell$-adic C-parameters for $G$. 
We take $c \in \ol{\bQ}_{\ell}$ such that $c^2=q$. 
We define 
\[
 i_c \colon {}^L G \to {}^C G 
\] 
by 
$i_c (g,w)=[(g,c^{-d_F(w)},w)]$. 
For an $\ell$-adic L-parameter $\varphi$ for $G$, 
we put 
\[
 \varphi_c=i_c \circ \varphi. 
\]

\begin{lem}\label{lem:ellCell}
We have a bijection between 
the set of the $\ell$-adic L-parameters for $G$ 
and the set of the 
$\ell$-adic C-parameters for $G$ 
given by sending 
$\varphi$ to $\varphi_c$. 
Further, this induces a bijection 
$\Phi_{\ell}(G) \simeq \Phi_{\ell}^{\mathrm{C}}(G)$. 
\end{lem}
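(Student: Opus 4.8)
The plan is to exhibit an explicit two-sided inverse of the assignment $\varphi \mapsto \varphi_c$ on the level of parameters, and then to check that this assignment intertwines $\wh{G}$-conjugacy with $\wh{\wt{G}}$-conjugacy, so that it descends to a bijection of equivalence classes. I would work throughout with the isomorphism \eqref{eq:strCG} and the exact sequence $1 \to {}^L G \to {}^C G \xrightarrow{t_{\bG_{\mathrm{m}}}} \bG_{\mathrm{m}} \to 1$; write $\iota_0 \colon {}^L G \hookrightarrow {}^C G$ for the closed immersion appearing there, so that $\iota_0(g,w) = [(g,1,w)]$, and for $w \in W_F$ put $s_c(w) = [(1,c^{d_F(w)},1)] \in {}^C G$, which is central and semisimple. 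Since $d_F \colon W_F \to \bZ$ is a continuous homomorphism, a direct computation with the group law of \eqref{eq:strCG} shows that $i_c$ is a continuous homomorphism of groups over $W_F$, that it is injective, and that $i_c(g,w) = \iota_0(g,w)\,s_c(w)$ with $t_{\bG_{\mathrm{m}}}(i_c(g,w)) = c^{2d_F(w)} = q^{d_F(w)}$.

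Next I would check that $\varphi_c = i_c \circ \varphi$ is an $\ell$-adic C-parameter for $G$ whenever $\varphi$ is an $\ell$-adic L-parameter for $G$. Continuity and the identity $(t_{\bG_{\mathrm{m}}} \circ \varphi_c)(w) = q^{d_F(w)}$ are immediate from the previous paragraph. For Frobenius-semisimplicity one uses that $\wt{G}$ splits over every splitting field $F'$ of $G$ and that the closed immersion $\wh{G} \rtimes \Gal(F'/F) \hookrightarrow \wh{\wt{G}} \rtimes \Gal(F'/F)$ reflects semisimplicity: hence $\iota_0(\varphi(\sigma_q))$ is semisimple, and multiplying it by the commuting central semisimple element $s_c(\sigma_q)$ keeps it semisimple, so $\varphi_c(\sigma_q)$ is semisimple. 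Conversely, given an $\ell$-adic C-parameter $\psi$ for $G$, the condition $(t_{\bG_{\mathrm{m}}} \circ \psi)(w) = q^{d_F(w)} = t_{\bG_{\mathrm{m}}}(s_c(w))$ forces $\psi(w)\,s_c(w)^{-1} \in \Ker t_{\bG_{\mathrm{m}}} = \iota_0({}^L G)$, so $w \mapsto \iota_0^{-1}(\psi(w)\,s_c(w)^{-1})$ is a well-defined continuous homomorphism $\varphi \colon W_F \to {}^L G(\ol{\bQ}_{\ell})$ of groups over $W_F$ with $i_c \circ \varphi = \psi$, and the semisimplicity argument run backwards shows $\varphi$ is Frobenius-semisimple. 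Together with the injectivity of $i_c$, and once relevance is matched as in the last paragraph, this yields the claimed bijection of parameter sets.

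To descend to equivalence classes, the key observation is that under \eqref{eq:strCG} the restriction of $i_c$ to the subgroup $\wh{G} \subset {}^L G$ of elements with trivial $W_F$-component is $g \mapsto [(g,1,1)] \in \wh{\wt{G}}$, and that conjugation on ${}^C G$ by any element of the form $[(1,z,1)]$ is trivial. Hence if $\varphi'(w) = g\,\varphi(w)\,g^{-1}$ for all $w$ with $g \in \wh{G}(\ol{\bQ}_{\ell})$, then $\varphi'_c(w) = i_c(g)\,\varphi_c(w)\,i_c(g)^{-1}$ with $i_c(g) \in \wh{\wt{G}}(\ol{\bQ}_{\ell})$; so $\varphi$ and $\varphi'$ equivalent implies $\varphi_c$ and $\varphi'_c$ equivalent. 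Conversely, given $h \in \wh{\wt{G}}(\ol{\bQ}_{\ell})$ with $\varphi'_c(w) = h\,\varphi_c(w)\,h^{-1}$, lift $h$ to some $(h_0,z_0) \in \wh{G}(\ol{\bQ}_{\ell}) \times \ol{\bQ}_{\ell}^{\times}$ (possible since $\ol{\bQ}_{\ell}$ is algebraically closed) and discard the central factor $[(1,z_0,1)]$ to get $\varphi'_c(w) = i_c(h_0)\,\varphi_c(w)\,i_c(h_0)^{-1} = i_c\bigl(h_0\,\varphi(w)\,h_0^{-1}\bigr)$, whence $\varphi'(w) = h_0\,\varphi(w)\,h_0^{-1}$ by the injectivity of $i_c$. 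Therefore $\varphi \mapsto \varphi_c$ induces an injection $\Phi_{\ell}(G) \to \Phi_{\ell}^{\mathrm{C}}(G)$, which is surjective because every $\ell$-adic C-parameter is some $\varphi_c$; this gives $\Phi_{\ell}(G) \simeq \Phi_{\ell}^{\mathrm{C}}(G)$.

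The one genuinely delicate point, which I would address last, is matching the relevance conditions, and I expect essentially all the work to lie here. I would use that $\wt{G} \to G$ is a central extension by $\bG_{\mathrm{m}}$, so that taking preimages gives a bijection between parabolic subgroups of $G$ over $F$ and those of $\wt{G}$ over $F$ preserving Levi data, and that $\wt{G}$ and $G$ have the same relative root datum; dually, every parabolic subgroup of ${}^C G$ contains the central $\bG_{\mathrm{m}}$, hence is of the form $\iota_0(P)\cdot\bG_{\mathrm{m}}$ for a unique parabolic subgroup $P$ of ${}^L G$, and under this correspondence $\iota_0(P)\cdot\bG_{\mathrm{m}}$ is relevant for $\wt{G}$ exactly when $P$ is relevant for $G$. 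Since $s_c(w)$ is central, a parabolic $\iota_0(P)\cdot\bG_{\mathrm{m}}$ of ${}^C G$ contains $\varphi_c(W_F)$ if and only if $P$ contains $\varphi(W_F)$; combining this with the construction above shows that $\varphi$ is relevant if and only if $\varphi_c$ is, which completes the argument.
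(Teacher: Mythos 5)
Your proposal is correct and follows essentially the same route as the paper: the paper dismisses the first claim as immediate from the definitions and handles descent to equivalence classes by noting that $\wh{\wt{G}}(\ol{\bQ}_{\ell})$ is generated by $\wh{G}(\ol{\bQ}_{\ell})$ and elements centralizing ${}^C G(\ol{\bQ}_{\ell})$, which is exactly your observation that conjugation by $[(1,z_0,1)]$ is trivial. You simply make explicit the verifications (injectivity of $i_c$, the inverse $\psi \mapsto \iota_0^{-1}(\psi\, s_c^{-1})$, transfer of Frobenius-semisimplicity and relevance) that the paper leaves to the reader.
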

\begin{proof}
The first claim follows from the definitions. 
If two $\ell$-adic L-parameters for $\wt{G}$ 
are conjugate by an element of 
$\wh{\wt{G}}(\ol{\bQ}_{\ell})$, then they are 
conjugate by an element of $\wh{G}(\ol{\bQ}_{\ell})$, 
since $\wh{\wt{G}}(\ol{\bQ}_{\ell})$ is generated by 
$\wh{G}(\ol{\bQ}_{\ell})$ and 
$Z_{\wh{\wt{G}}(\ol{\bQ}_{\ell})}({}^C G(\ol{\bQ}_{\ell}))$. 
Hence the second claim follows from the first one. 
\end{proof}

We define a map   
\[
 \cC_{G,\ell}^{c} \colon 
 \Phi_{\ell}(G) \to \Phi_{\ell}^{\mathrm{C}}(G)
\]
by sending $[\varphi]$ to $[\varphi_c]$. 
Then $\cC_{G,\ell}^{c}$ is a bijection by Lemma \ref{lem:ellCell}. 
For an isomorphism $\iota \colon \bC \stackrel{\sim}{\to} \ol{\bQ}_{\ell}$, 
we define 
\[
 \mathrm{LL}_{G,\ell}^{\mathrm{C},\iota} \colon 
 \Irr_{\ell} (G(F)) \to \Phi_{\ell}^{\mathrm{C}}(G) 
\]
as $\cC_{G,\ell}^{\iota(q^{\frac{1}{2}})} \circ \mathrm{LL}_{G,\ell}^{\iota}$. 

\begin{conj}\label{conj:LLC}
The map $\mathrm{LL}_{G,\ell}^{\mathrm{C},\iota}$ is independent of 
a choice of $\iota \colon \bC \stackrel{\sim}{\to} \ol{\bQ}_{\ell}$. 
\end{conj}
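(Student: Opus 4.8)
The statement to prove is that $\mathrm{LL}_{G,\ell}^{\mathrm{C},\iota}$ does not depend on $\iota$. Since $\mathrm{LL}_{G,\ell}^{\mathrm{C},\iota}=\cC_{G,\ell}^{\iota}\circ\mathrm{LL}_{G,\ell}^{\iota}$ and both factors visibly depend on $\iota$, the plan is to trace precisely how the two dependencies interact and show they cancel. Fix two isomorphisms $\iota,\iota'\colon\bC\stackrel{\sim}{\to}\ol{\bQ}_{\ell}$ and set $\kappa=\iota'\circ\iota^{-1}\in\Aut(\ol{\bQ}_{\ell})$; it suffices to compare the outputs using $\iota$ versus $\iota'$ for a fixed $[\pi]\in\Irr_{\ell}(G(F))$, equivalently to understand how $\kappa$ acts on the various parameter sets. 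First I would unwind $\mathrm{LL}_{G,\ell}^{\iota}$ through the chain $\Phi(G)\simeq\Phi^{\SL}(G)\simeq\Phi_{\bC}^{\mathrm{WD}}(G)\xrightarrow{\Phi_{\iota}^{\mathrm{WD}}}\Phi_{\ol{\bQ}_\ell}^{\mathrm{WD}}(G)\simeq\Phi_{\ell}(G)$, noting that the only $\iota$-dependent arrow is $\Phi_{\iota}^{\mathrm{WD}}$, which by Remark \ref{rem:iotaind} is literally post-composition of a Weil--Deligne $L$-homomorphism with $\iota$. So the two parameters $\mathrm{LL}_{G,\ell}^{\iota}([\pi])$ and $\mathrm{LL}_{G,\ell}^{\iota'}([\pi])$ differ by applying $\kappa$: if $\xi$ is the common Weil--Deligne $L$-homomorphism over $\bC$ attached to $\pi$ via $\iota$ (vs.\ $\kappa$-twisted via $\iota'$), then $\varphi=\Theta([\iota\circ\xi])$ and $\varphi'=\Theta([\iota'\circ\xi])=\Theta([\kappa\circ(\iota\circ\xi)])$; via Proposition \ref{prop:PhiWDell} this says $[\varphi']=\kappa_*[\varphi]$ in $\Phi_{\ell}(G)$, where $\kappa_*$ is the self-map of $\cL_{\ell}(G)$ induced by post-composing an $\ell$-adic $L$-homomorphism with $\kappa$ (one must check $\Theta$ intertwines $\kappa\circ-$ on the Weil--Deligne side with $\kappa_*$ on the $\ell$-adic side; this is immediate from the formula $\varphi_{\xi}(\sigma_q^m\sigma)=\xi(t'_{\ell}(\sigma),\sigma_q^m\sigma)$ once one notes $\kappa$ fixes $\ol{\bQ}$, hence fixes $t'_{\ell}(\sigma)\in\bZ_{\ell}$, together with independence of the choice of $t'_{\ell}$ by Lemma \ref{lem:indepsiso}).

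**Comparing the $\cC$-steps.** Next I would compute $\cC_{G,\ell}^{\iota}([\varphi])=[\varphi_c]=[i_c\circ\varphi]$ with $c=\iota(q^{1/2})$, and similarly $\cC_{G,\ell}^{\iota'}([\varphi'])=[i_{c'}\circ\varphi']$ with $c'=\iota'(q^{1/2})=\kappa(c)$. Now $i_{c'}\circ\varphi'=i_{c'}\circ(\kappa\circ\varphi)$; I claim this equals $\kappa\circ(i_c\circ\varphi)$ up to the point that $\kappa$ acts on $\varphi$ in the $\wh{G}$-coordinate and on $c^{d_F(w)}$ in the $\bG_{\mathrm m}$-coordinate. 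Precisely, using the structure isomorphism \eqref{eq:strCG}, the map $i_c$ sends $(g,w)$ to $[(g,c^{d_F(w)},w)]$; applying $\kappa$ coordinatewise to $i_c\circ\varphi$ replaces $g$ by $\kappa(g)$ and $c^{d_F(w)}$ by $\kappa(c)^{d_F(w)}=(c')^{d_F(w)}$, which is exactly $i_{c'}\circ(\kappa\circ\varphi)=i_{c'}\circ\varphi'$. Hence $\cC_{G,\ell}^{\iota'}(\mathrm{LL}_{G,\ell}^{\iota'}([\pi]))=[\kappa\circ(i_c\circ\varphi)]$ as an $\ell$-adic $C$-parameter, i.e.\ it is the image of $\cC_{G,\ell}^{\iota}(\mathrm{LL}_{G,\ell}^{\iota}([\pi]))=[i_c\circ\varphi]$ under the natural $\kappa$-action on $\Phi_{\ell}^{\mathrm C}(G)$. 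The final step is to show this $\kappa$-action is trivial on $\Phi_{\ell}^{\mathrm C}(G)$: an $\ell$-adic $C$-parameter takes values in a linear algebraic group over $\ol{\bQ}_{\ell}$, but the essential point is that after passing to Weil--Deligne type (Proposition \ref{prop:PhiWDell} applied to $\wt G$), a $C$-parameter corresponds to a Weil--Deligne $L$-homomorphism $\wt\xi$ for $\wt G$ with $t_{\bG_{\mathrm m}}\circ\wt\xi$ equal to the \emph{fixed} character $w\mapsto q^{d_F(w)}$ — which is $\kappa$-invariant since $q\in\bZ$ — so that the twisting by $c$ has been absorbed and what remains is $\kappa$-invariant because it already came, via $\iota$, from a parameter over $\bC$ whose $\iota$-transfer is being compared with its $\iota'=\kappa\iota$-transfer, and these agree in $\Phi_{\ell}^{\mathrm C}(G)$ by the cancellation just exhibited.

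**The main obstacle.** The genuinely delicate point is the last one: verifying that the residual $\kappa$-action on $\Phi_{\ell}^{\mathrm C}(G)$ is trivial, i.e.\ that incorporating the $C$-twist really does cancel the $\iota$-dependence rather than merely shifting it. The clean way to organize this is to set up, once and for all, a commutative square
\[
\begin{CD}
\Phi_{\bC}^{\mathrm{WD}}(G) @>{\Phi_{\iota}^{\mathrm{WD}}}>> \Phi_{\ol{\bQ}_\ell}^{\mathrm{WD}}(G) @>{\sim}>> \Phi_{\ell}(G) @>{\cC_{G,\ell}^{\iota}}>> \Phi_{\ell}^{\mathrm{C}}(G)\\
@| @. @. @VV{\kappa_*}V\\
\Phi_{\bC}^{\mathrm{WD}}(G) @>{\Phi_{\iota'}^{\mathrm{WD}}}>> \Phi_{\ol{\bQ}_\ell}^{\mathrm{WD}}(G) @>{\sim}>> \Phi_{\ell}(G) @>{\cC_{G,\ell}^{\iota'}}>> \Phi_{\ell}^{\mathrm{C}}(G)
\end{CD}
\]
and then prove $\kappa_*=\mathrm{id}$ on $\Phi_{\ell}^{\mathrm C}(G)$ directly: under the equivalence $\Phi_{\ell}^{\mathrm C}(G)\simeq\Phi_{\ol{\bQ}_\ell}^{\mathrm{WD}}(\wt G)_{q^{d_F}}$ (the subset with prescribed $t_{\bG_{\mathrm m}}$), a class is represented by a pair $(\tau,N)$ with $\tau$ determined on $W_F$ and, crucially, with the torus-coordinate normalized so that no free choice of square root of $q$ survives. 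One must check that $\kappa$ — which fixes all roots of unity lying in $\ol{\bQ}$ and, more to the point here, fixes $q$, $d_F(w)$, and the $\bZ_\ell$-valued $t'_\ell(\sigma)$ up to the harmless rescaling already handled by Lemma \ref{lem:indepsiso} — acts trivially on such normalized representatives up to $\wh{\wt G}(\ol{\bQ}_\ell)$-conjugacy. I expect this to reduce, via the faithful representation argument in the proof of Proposition \ref{prop:PhiWDell}, to the statement that a semisimple element of $\GL_n(\ol{\bQ}_\ell)$ whose eigenvalues, divided by the appropriate power of $c$, are algebraic numbers fixed by $\kappa$, is conjugate to its $\kappa$-transform — which is exactly the phenomenon that the Bushnell--Henniart twist was designed to produce, now in the $C$-group formulation. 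Everything else is bookkeeping with the formulas already in the excerpt.
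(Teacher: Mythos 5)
The statement you are trying to prove is stated as a \emph{conjecture} in the paper (Conjecture~\ref{conj:LLC}), and the paper gives no proof of it in general: it only proves that Conjectures~\ref{conj:LLC} and~\ref{conj:LLT} are equivalent (Theorem~\ref{thm:CTeq}) and that they hold for $\GL_n$, and even the $\GL_n$ case requires citing deep theorems of Henniart \cite{HenUnecar} and Bushnell--Henniart \cite{BuHeDHII}. So one should already be suspicious of a proof that claims to establish it for arbitrary $G$ by ``bookkeeping.''

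Your argument has a genuine gap, and it occurs at the very first step. You write that ``the only $\iota$-dependent arrow is $\Phi_{\iota}^{\mathrm{WD}}$,'' and conclude that $\mathrm{LL}_{G,\ell}^{\iota}([\pi])$ and $\mathrm{LL}_{G,\ell}^{\iota'}([\pi])$ ``differ by applying $\kappa$.'' This is false for two distinct reasons. First, the input to the chain is $\mathrm{LL}_G([\pi\otimes_{\ol{\bQ}_\ell,\iota^{-1}}\bC])$, and the complex representation $\pi\otimes_{\ol{\bQ}_\ell,\iota^{-1}}\bC$ itself depends on $\iota$; two different choices give complex representations related by an abstract field automorphism of $\bC$, and it is \emph{not known} (and in a naive form is \emph{not true}) that the complex local Langlands correspondence $\mathrm{LL}_G$ commutes with $\Aut(\bC)$ — this is precisely the arithmetic content that the $C$-group twist is meant to compensate for, and is essentially a restatement of the conjecture you are trying to prove. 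Second, even if $\mathrm{LL}_G$ were $\Aut(\bC)$-equivariant, the bijection $\Phi^{\SL}(G)\simeq\Phi_{\bC}^{\mathrm{WD}}(G)$ given by $\Xi$ involves the positive real square root $\lvert w\rvert^{1/2}$, which is emphatically not fixed by $\Aut(\bC)$; so $\Xi$ does not commute with the action of abstract field automorphisms, and your ``common $\xi$'' does not exist.

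A clean way to see that the argument cannot possibly be correct: your proof uses no property of $\mathrm{LL}_G$ beyond its mere existence as a surjection $\Irr(G(F))\to\Phi(G)$. But the conjecture is plainly false for an arbitrary such surjection (one could rig a surjection that depends on a choice of embedding of $\ol{\bQ}$ into $\bC$ in a way that breaks $\iota$-independence). So any correct proof must invoke genuine arithmetic properties of $\mathrm{LL}_G$ — for instance, Henniart's characterization of LLC for $\GL_n$ via $\varepsilon$-factors of pairs, which is exactly what the paper's proof of Corollary~\ref{thm:CTeq}$+1$ (the $\GL_n$ case) does. What your write-up calls ``exactly the phenomenon that the Bushnell--Henniart twist was designed to produce'' is not a fact you can ``reduce to'' — it \emph{is} the conjecture, and it remains unproved for general $G$.
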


\subsection{Tannakian $\ell$-adic L-parameter}
Assume that $C=\ol{\bQ}_{\ell}$. 
For a topological group $H$, 
let $\Rep_{\ol{\bQ}_{\ell}}(H)$ be 
the category of continuous finite dimensional 
representations of $H$ over $\ol{\bQ}_{\ell}$. 
For an algebraic group $H$ over a field, 
a character $\chi$ of $H$ and 
a cocharacter $\mu$ of $H$, 
we define $\langle \chi, \mu \rangle_H \in \bZ$ 
by 
\[
 (\chi \circ \mu)(z) =z^{\langle \chi, \mu \rangle_H}. 
\]
For a cocharacter 
$\mu \in X_*(T)$ of a torus $T$ over $\ol{F}$, 
let $\wh{\mu} \in X^*(\wh{T})$ denote the corresponding 
character of the dual torus $\wh{T}$. 

Let $\sM_G$ be the conjugacy classes of 
cocharacters $\bG_{\mathrm{m}} \to G_{\ol{F}}$.  
Let $[\mu] \in \sM_G$. We put 
\[
 d_G([\mu])= \langle 2\rho_G, \mu \rangle_T , 
\]
where we take a Borel subgroup 
$B \subset G_{\ol{F}}$, 
a maximal torus $T \subset B$ defined over $\ol{F}$ 
and a dominant representative $\mu \in X_*(T)$. 
Let 
$E_{[\mu]}$ be the field of definition of $[\mu]$. 
Let $r_{\wh{G},[\mu]}$ be the 
irreducible representation of $\wh{G}(\ol{\bQ}_{\ell})$ 
of highest weight $\wh{\mu}$ 
viewed as a dominant character of a maximal torus of $\wh{G}$.

We take $c \in \ol{\bQ}_{\ell}$ such that $c^2=q$. 
For an integer $m$, let 
\[
 \left( \frac{m}{2} \right)_c 
\]
denote the twist by the character 
$W_F \to \ol{\bQ}_{\ell}$ sending $w$ to 
$c^{-md_F(w)}$. 

Let $\Rep_{\ol{\bQ}_{\ell}}^{\mathrm{alg}} ({}^L G)$ denote the 
category of continuous finite dimensional representations of 
${}^L G (\ol{\bQ}_{\ell})$ over $\ol{\bQ}_{\ell}$ 
whose restrictions to $\wh{G}(\ol{\bQ}_{\ell})$ 
are algebraic. 
Let 
$r \colon {}^L G (\ol{\bQ}_{\ell}) \to \Aut (V)$ 
be an object in $\Rep_{\ol{\bQ}_{\ell}}^{\mathrm{alg}} ({}^L G)$. 
Then we have a decomposition 
\[
 V=\bigoplus_{[\mu] \in \sM_G} V_{[\mu]} 
\]
as representations of $\wh{G}(\ol{\bQ}_{\ell})$ 
where $V_{[\mu]}$ is the 
$r_{\wh{G},[\mu]}$-typic part of $V$. 
For an $\ell$-adic L-parameter $\varphi$, 
we define $(r \circ \varphi)_c \colon W_F \to \Aut (V)$ 
by 
\[
 V=\bigoplus_{[\mu] \in \cM_G} V_{[\mu]} \left( \frac{d_G([\mu])}{2}\right)_c, 
\]
which means that we twist 
$r \circ \varphi \colon W_F \to \Aut (V)$ 
by 
\[
 \left( \frac{d_G([\mu])}{2}\right)_c 
\] 
on each direct summand $V_{[\mu]}$. 
This is well-defined, because $d_G(w[\mu])=d_G([\mu])$ for $w \in W_F$. 

For an $\ell$-adic L-parameter $\varphi$ for $G$, 
we define a tensor functor 
\[
 \cF_{\varphi,c} \colon \Rep_{\ol{\bQ}_{\ell}}^{\mathrm{alg}} ({}^L G) 
 \to \Rep_{\ol{\bQ}_{\ell}} (W_F) 
\]
by 
\[
 \cF_{\varphi,c} (r) =(r \circ \varphi)_c . 
\]

\begin{defn}\label{def:Tell}
A Tannakian $\ell$-adic L-parameter for $G$ is a functor 
\[
 \cF \colon \Rep_{\ol{\bQ}_{\ell}}^{\mathrm{alg}} ({}^L G) 
 \to \Rep_{\ol{\bQ}_{\ell}} (W_F) 
\]
which is equal to $\cF_{\varphi,c}$ for 
an $\ell$-adic L-parameter $\varphi$ for $G$. 
We say that two Tannakian $\ell$-adic L-parameters 
$\cF$ and $\cF'$ 
are equivalent if 
there is $g \in \wh{G}(\ol{\bQ}_{\ell})$ such that, 
for all $r \in \Rep_{\ol{\bQ}_{\ell}}^{\mathrm{alg}} ({}^L G)$, we have 
$\cF(r)=\cF'(r)^{r(g)}$. 
\end{defn}

\begin{lem}\label{lem:TLind}
The set of the Tannakian $\ell$-adic L-parameters for $G$ 
is independent of a choice of 
$c \in \ol{\bQ}_{\ell}$ such that $c^2=q$.
\end{lem}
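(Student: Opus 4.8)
The plan is to show that for each of the two square roots $c$ of $q$ the set $\{\cF_{\varphi,c}\}$, as $\varphi$ ranges over $\ell$-adic L-parameters for $G$, does not change when $c$ is replaced by $-c$; equivalently, to produce for every $\ell$-adic L-parameter $\varphi$ another one $\varphi'$ with $\cF_{\varphi',-c}=\cF_{\varphi,c}$, in such a way that $\varphi\mapsto\varphi'$ is a bijection on the set of $\ell$-adic L-parameters. The mechanism is a twist by the order-two central element $z_G$.

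First I would record the action of $z_G$ on the isotypic pieces. For $[\mu]\in\sM_G$ the element $z_G=\delta_G(-1)\in Z(\wh G)$ acts on the irreducible representation $r_{\wh G,[\mu]}$ through its central character, i.e.\ by $\wh\mu(z_G)$. Under the canonical identifications $X^*(\wh T)=X_*(T)$ and $X_*(\wh T)=X^*(T)$ sending $\wh\mu\mapsto\mu$ and $\delta_G\mapsto 2\rho_G$, the pairing $\langle\wh\mu,\delta_G\rangle_{\wh T}$ equals $\langle 2\rho_G,\mu\rangle_T=d_G([\mu])$, so $\wh\mu(\delta_G(z))=z^{d_G([\mu])}$, and hence $z_G$ acts on the $r_{\wh G,[\mu]}$-typic subspace $V_{[\mu]}$ of any $r\in\Rep_{\ol\bQ_\ell}^{\mathrm{alg}}({}^L G)$ by the scalar $(-1)^{d_G([\mu])}$.

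Next, given an $\ell$-adic L-parameter $\varphi$ for $G$, I would set $\varphi'(w)=z_G^{d_F(w)}\varphi(w)$. Since $z_G\in Z(\wh G)^{\Gamma_F}$ is central in ${}^L G$ and $d_F\colon W_F\to\bZ$ is a continuous homomorphism, $\varphi'$ is again a continuous homomorphism of groups over $W_F$; it is Frobenius-semisimple because $\varphi'(\sigma_q)=z_G\,\varphi(\sigma_q)$ is a product of two commuting semisimple elements; and it is relevant because $\varphi$ and $\varphi'$ have the same image modulo $Z(\wh G)$, and $Z(\wh G)$ lies in every parabolic subgroup of ${}^L G$, so the two parameters are contained in exactly the same parabolics. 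Thus $\varphi'$ is an $\ell$-adic L-parameter, and $\varphi\mapsto\varphi'$ is an involution (as $z_G^2=1$), hence a bijection, on the set of $\ell$-adic L-parameters for $G$.

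Finally I would compute $\cF_{\varphi',-c}(r)$ for $r\in\Rep_{\ol\bQ_\ell}^{\mathrm{alg}}({}^L G)$. The grading of $V$ by the value of $d_G([\mu])$ is $W_F$-stable (since $d_G(w[\mu])=d_G([\mu])$) and the twists are constant along it, so it suffices to work on a single $V_{[\mu]}$. There one has $r(\varphi'(w))=(-1)^{d_G([\mu])d_F(w)}\,r(\varphi(w))$ by the second step, while the prescribed twist by $(d_G([\mu])/2)_{-c}$ multiplies by $(-c)^{d_G([\mu])d_F(w)}=(-1)^{d_G([\mu])d_F(w)}c^{d_G([\mu])d_F(w)}$; the two sign contributions cancel, and what remains on $V_{[\mu]}$ is exactly the twist of $r\circ\varphi$ by $(d_G([\mu])/2)_c$, i.e.\ $\cF_{\varphi,c}(r)$. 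Hence $\cF_{\varphi',-c}=\cF_{\varphi,c}$, so every Tannakian $\ell$-adic L-parameter obtained using $c$ is also obtained using $-c$; exchanging the roles of $c$ and $-c$ gives the reverse inclusion, and the two sets coincide. The only substantive point --- everything else being bookkeeping --- is the computation of the $z_G$-action as the sign $(-1)^{d_G([\mu])}$, which is precisely what makes the unramified character $w\mapsto z_G^{d_F(w)}$ absorb the change $c\mapsto -c$.
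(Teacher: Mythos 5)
Your proof is correct and follows essentially the same route as the paper: you compute that $z_G$ acts on $V_{[\mu]}$ by $(-1)^{d_G([\mu])}$ and then absorb the change $c\mapsto -c$ into a twist of $\varphi$ by the unramified central character $w\mapsto z_G^{d_F(w)}$, which is exactly the relation $(r\circ\varphi)_{-c}=(r\circ(\omega_{z_G}\varphi))_c$ used in the paper. The only difference is that you spell out the verification that the twisted homomorphism is still an $\ell$-adic L-parameter, which the paper leaves implicit.
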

\begin{proof}
We have 
\begin{equation}\label{eq:mudG}
 \wh{\mu} (z_G) =(-1)^{\langle \wh{\mu} ,\delta_G \rangle_{\wh{T}}}
 =(-1)^{d_G([\mu])}
\end{equation}
for $\mu \in X_*(T)$. 
Let 
\[
 \omega_{z_G} \colon W_F \to Z(\wh{G})^{\Gamma_F}(\ol{\bQ}_{\ell}) 
 \hookrightarrow {}^L G (\ol{\bQ}_{\ell}) 
\]
be the character 
sending $w$ to $z_G^{d_F(w)}$. 
By \eqref{eq:mudG}, we have 
\begin{equation}\label{eq:-crel}
 (r \circ \varphi)_{-c} = (r \circ ( \omega_{z_G} \varphi ))_c 
\end{equation}
for an $\ell$-adic L-parameter $\varphi$ 
and $r \in \Rep_{\ol{\bQ}_{\ell}}^{\mathrm{alg}} ({}^L G)$. 
Since $\omega_{z_G} \varphi$ is also an $\ell$-adic L-parameter, 
the claim follows. 
\end{proof}

Let $\Phi_{\ell}^{\mathrm{T}}(G)$ be 
the set of equivalence classes of 
Tannakian $\ell$-adic L-parameters for $G$. 
This set is independent of 
a choice of $c$ by Lemma \ref{lem:TLind}.

\begin{lem}\label{lem:ellTell}
We have a bijection between 
the set of the $\ell$-adic L-parameters for $G$ 
and the set of the 
Tannakian $\ell$-adic L-parameters for $G$ 
given by sending 
$\varphi$ to $F_{\varphi,c}$. 
Further, this induces a bijection 
$\Phi_{\ell}(G) \simeq \Phi_{\ell}^{\mathrm{T}}(G)$. 
\end{lem}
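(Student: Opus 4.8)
The plan is to obtain surjectivity for free and injectivity by testing against a single faithful representation of ${}^L G$. By Definition \ref{def:Tell} a Tannakian $\ell$-adic L-parameter for $G$ is precisely a functor of the form $\cF_{\varphi,c}$, so $\varphi \mapsto \cF_{\varphi,c}$ is surjective and the only content of the first claim is that it is injective, i.e.\ that $\cF_{\varphi,c}=\cF_{\varphi',c}$ as functors forces $\varphi=\varphi'$. First I would observe that $\cF_{\varphi,c}$ recovers the untwisted functor $r \mapsto r\circ\varphi$: for an object $r\colon{}^L G(\ol{\bQ}_{\ell})\to\Aut(V)$ the decomposition $V=\bigoplus_{[\mu]\in\sM_G}V_{[\mu]}$ into $r_{\wh G,[\mu]}$-typic parts is intrinsic to $r$ (it is the isotypic decomposition of $V$ as a representation of $\wh G(\ol{\bQ}_{\ell})$, indexed by $\sM_G$ through $[\mu]\mapsto\wh\mu$), and on each $V_{[\mu]}$ the functor twists $r\circ\varphi$ by the character $w\mapsto c^{\,d_G([\mu])d_F(w)}$, which depends only on $[\mu]$ and $c$; twisting back by its inverse recovers $r\circ\varphi$ from $\cF_{\varphi,c}(r)$. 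Hence $\cF_{\varphi,c}=\cF_{\varphi',c}$ implies $r\circ\varphi=r\circ\varphi'$ for every $r$.

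Next I would fix a finite Galois extension $F'/F$ splitting $G$, a faithful algebraic representation $\ol\eta\colon\wh G(\ol{\bQ}_{\ell})\rtimes\Gal(F'/F)\to\GL_n(\ol{\bQ}_{\ell})$, and let $\eta$ be its pullback along ${}^L G(\ol{\bQ}_{\ell})\to\wh G(\ol{\bQ}_{\ell})\rtimes\Gal(F'/F)$; then $\eta$ is an object of $\Rep_{\ol{\bQ}_{\ell}}^{\mathrm{alg}}({}^L G)$. Writing $\varphi(w)=(\beta(w),w)$ with $\beta\colon W_F\to\wh G(\ol{\bQ}_{\ell})$ the associated continuous $1$-cocycle, one has $(\eta\circ\varphi)(w)=\ol\eta(\beta(w),\bar w)$ where $\bar w$ is the image of $w$ in $\Gal(F'/F)$; since $\ol\eta$ is injective, $\eta\circ\varphi$ determines $\beta$, hence $\varphi$. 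Combined with the previous step, $\cF_{\varphi,c}$ determines $\varphi$, which proves the first claim.

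For the ``further'' statement I would check that the bijection intertwines the two equivalence relations. If $\varphi'=g\varphi(\cdot)g^{-1}$ with $g\in\wh G(\ol{\bQ}_{\ell})$, then $r(g)$ preserves each $V_{[\mu]}$, since it carries the $r_{\wh G,[\mu]}$-typic part onto the $(r_{\wh G,[\mu]}\circ\Ad(g))$-typic part and inner automorphisms of $\wh G$ fix isomorphism classes of representations; hence conjugation by $r(g)$ commutes with the twists defining $\cF_{\varphi,c}$, and $\cF_{\varphi',c}(r)=\cF_{\varphi,c}(r)^{r(g)}$ for all $r$, so $[\varphi]\mapsto[\cF_{\varphi,c}]$ is well defined. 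Conversely, if $\cF_{\varphi,c}(r)=\cF_{\varphi',c}(r)^{r(g)}$ for all $r$, then untwisting gives that $r\circ\varphi$ and $r\circ\varphi'$ are conjugate by $r(g)$ for all $r$; applying this to $r=\eta$, using faithfulness of $\ol\eta$ and the fact that $\varphi,\varphi'$ are homomorphisms over $W_F$, and comparing the $\wh G(\ol{\bQ}_{\ell})$-parts of the images in $\wh G(\ol{\bQ}_{\ell})\rtimes\Gal(F'/F)$, one concludes that $\varphi$ and $\varphi'$ are conjugate by an element of $\wh G(\ol{\bQ}_{\ell})$. Therefore the induced map $\Phi_{\ell}(G)\to\Phi_{\ell}^{\mathrm{T}}(G)$ is a well-defined bijection.

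The step that takes the most care — it is bookkeeping rather than a genuine difficulty — is the ``untwisting'': checking that $V=\bigoplus_{[\mu]}V_{[\mu]}$ is functorial in $r$, that the twisting characters depend only on $[\mu]$ and $c$, and that $r(g)$ for $g\in\wh G(\ol{\bQ}_{\ell})$ respects the decomposition, so that the assignment $\cF_{\varphi,c}(r)\mapsto r\circ\varphi$ is canonical and compatible with $\wh G$-conjugation. Once this is settled, the rest follows formally from Definition \ref{def:Tell} and the existence of a faithful object of $\Rep_{\ol{\bQ}_{\ell}}^{\mathrm{alg}}({}^L G)$ factoring through $\wh G(\ol{\bQ}_{\ell})\rtimes\Gal(F'/F)$.
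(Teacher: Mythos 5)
Your proof is correct and follows essentially the same route as the paper's: surjectivity is immediate from Definition \ref{def:Tell}, injectivity (and the compatibility with $\wh{G}(\ol{\bQ}_{\ell})$-conjugacy) is detected by a representation of ${}^L G$ factoring through a faithful algebraic representation of $\wh{G}(\ol{\bQ}_{\ell}) \rtimes \Gal(F'/F)$ for $F'$ splitting $G$. You simply make explicit the untwisting step and the descent to equivalence classes, which the paper leaves implicit.
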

\begin{proof}
We show the first claim. 
The map is surjective by Definition \ref{def:Tell}. 
Assume that $\varphi$ and $\varphi'$ 
are different $\ell$-adic L-parameters for $G$ and 
$\cF_{\varphi,c}=\cF_{\varphi',c}$. 
We take $w \in W_F$ such that 
$\varphi (w) \neq \varphi'(w)$. 
Further, we take a finite Galois extension $F'$ of $F$ 
such that 
$G$ splits over $F'$ and 
the images of $\varphi (w)$ and $\varphi'(w)$ 
in $\wh{G}(\ol{\bQ}_{\ell}) \rtimes \Gal (F'/F)$ 
are different. 
By considering a representation of 
${}^L G (\ol{\bQ}_{\ell})$ 
which factors through a faithful algebraic representation of 
$\wh{G}(\ol{\bQ}_{\ell}) \rtimes \Gal (F'/F)$, 
we have a contradiction to $\cF_{\varphi,c}=\cF_{\varphi',c}$. 
Hence the map is injective. 

The second claim follows from the first one. 
\end{proof}

For a Tannakian $\ell$-adic L-parameter $\cF$ for $G$, 
we take 
an $\ell$-adic L-parameter $\varphi$ for $G$ 
such that $\cF =\cF_{\varphi,c}$. 
Then the centralizer $S_{\varphi}=Z_{\wh{G}(\ol{\bQ}_{\ell})}(\Image \varphi)$ 
is independent of a choice of $c$ by \eqref{eq:-crel}. 
We write $S_{\cF}$ for $S_{\varphi}$. 
Then $\cF$ naturally factors through 
\[
 \cF_S \colon \Rep_{\ol{\bQ}_{\ell}}^{\mathrm{alg}} ({}^L G) 
 \to \Rep_{\ol{\bQ}_{\ell}} (S_{\cF} \times W_F). 
\]

For a finite separable extension $F'$ of $F$, 
we define the restriction 
\[
 \cF|_{F'} \colon 
 \Rep_{\ol{\bQ}_{\ell}}^{\mathrm{alg}} ({}^L G_{F'}) 
 \to \Rep_{\ol{\bQ}_{\ell}} (W_{F'}) 
\]
of $\cF$ to $F'$ 
by the usual restriction to $W_{F'}$ of an 
$\ell$-adic L-parameter for $G$ and 
bijections given by Lemma \ref{lem:ellTell}. 
Let 
\[
 \cF_S|_{F'} \colon \Rep_{\ol{\bQ}_{\ell}}^{\mathrm{alg}} ({}^L G_{F'}) 
 \to \Rep_{\ol{\bQ}_{\ell}} (S_{\cF} \times W_{F'}) 
\]
be the composition of 
\[
 (\cF|_{F'})_S \colon \Rep_{\ol{\bQ}_{\ell}}^{\mathrm{alg}} ({}^L G_{F'}) 
 \to \Rep_{\ol{\bQ}_{\ell}} (S_{\cF|_{F'}} \times W_{F'}) 
\]
and the natural functor 
\[
 \Rep_{\ol{\bQ}_{\ell}} (S_{\cF|_{F'}} \times W_{F'})
 \to \Rep_{\ol{\bQ}_{\ell}} (S_{\cF} \times W_{F'}) 
\]
induced by the restriction with respect to 
$S_{\cF} \subset S_{\cF|_{F'}}$. 

We define a map 
\[
 \cT_{G,\ell}^{c} \colon 
 \Phi_{\ell}(G) \to \Phi_{\ell}^{\mathrm{T}}(G)
\]
by sending 
$[\varphi]$ to $[\cF_{\varphi,c}]$. 
Then $\cT_{G,\ell}^{c}$ is a bijection 
by Lemma \ref{lem:ellTell}. 
For an isomorphism $\iota \colon \bC \stackrel{\sim}{\to} \ol{\bQ}_{\ell}$, 
we define 
\[
 \mathrm{LL}_{G,\ell}^{\mathrm{T},\iota} \colon 
 \Irr_{\ell} (G(F)) \to \Phi_{\ell}^{\mathrm{T}}(G) 
\]
as $\cT_{G,\ell}^{\iota (q^{\frac{1}{2}})} \circ \mathrm{LL}_{G,\ell}^{\iota}$. 

\begin{conj}\label{conj:LLT}
The map $\mathrm{LL}_{G,\ell}^{\mathrm{T},\iota}$ is independent of 
a choice of $\iota \colon \bC \stackrel{\sim}{\to} \ol{\bQ}_{\ell}$. 
\end{conj}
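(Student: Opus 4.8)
The plan is to reduce Conjecture~\ref{conj:LLT} to a Galois-equivariance property of the \emph{complex} local Langlands correspondence, and then to check that this property, combined with the identities \eqref{eq:mudG} and \eqref{eq:-crel} already established in the paper, forces the $\iota$-dependence to cancel. Fix two isomorphisms $\iota,\iota'\colon\bC\stackrel{\sim}{\to}\ol{\bQ}_\ell$ and put $\tau=\iota'^{-1}\circ\iota\in\Aut(\bC)$, so that $\pi\otimes_{\ol{\bQ}_\ell,\iota'^{-1}}\bC=(\pi\otimes_{\ol{\bQ}_\ell,\iota^{-1}}\bC)^{\tau}$, the coefficient transport satisfies $\iota'_*=\iota_*\circ(\tau^{-1})_*$, and $\iota'(q^{\frac12})=\varepsilon(\tau)\,\iota(q^{\frac12})$ with $\varepsilon(\tau)=\tau(q^{\frac12})/q^{\frac12}\in\{\pm1\}$ (which is well-defined since $q^{\frac12}$ is algebraic). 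Thus $\mathrm{LL}_{G,\ell}^{\mathrm{T},\iota}$ and $\mathrm{LL}_{G,\ell}^{\mathrm{T},\iota'}$ differ only through (i) replacing $\pi_\bC:=\pi\otimes_{\ol{\bQ}_\ell,\iota^{-1}}\bC$ by its $\tau$-conjugate $\pi_\bC^{\tau}$, (ii) the change of coefficient transport in the bijections of Section~\ref{sec:Lp}, and (iii) replacing $c=\iota(q^{\frac12})$ by $c'=\varepsilon(\tau)c$ in $\cF_{-,c}$. The key input is that $\mathrm{LL}_G$ is compatible with $\Aut(\bC)$ in the precise sense that there is an unramified character twist, of order dividing $2$ and built from $z_G$, measuring the failure of $\mathrm{LL}_G(\pi_\bC^{\tau})=\mathrm{LL}_G(\pi_\bC)^{\tau}$ in $\Phi^{\SL}(G)$; for $\GL_n$ this follows from the characterization of $\mathrm{LL}_{\GL_n}$ by $L$- and $\varepsilon$-factors together with the behaviour of local constants under $\Aut(\bC)$, which is exactly what underlies the $\GL_n$-case of the conjecture.

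\textbf{The cancellation.} Writing $\phi=\mathrm{LL}_G(\pi_\bC)^{\SL}$, the above compatibility identifies $\mathrm{LL}_G(\pi_\bC^{\tau})^{\SL}$ with a twist of $\phi^{\tau}$; passing through $\Xi$ one computes, using that $\tau$ sends $\lvert w\rvert^{1/2}$ to $\varepsilon(\tau)^{d_F(w)}\lvert w\rvert^{1/2}$ and that $d\bigl(\varepsilon(\tau)^{d_F(w)}\bigr)=\bigl(-I_2\bigr)^{n(\tau)d_F(w)}$ is central in $\SL_2$ (with $n(\tau)=\tfrac{1-\varepsilon(\tau)}{2}$), that $\xi_{\phi^{\tau}}$ differs from $\xi_\phi^{\tau}$ by the unramified $\wh G$-valued character $w\mapsto\kappa_\phi^{n(\tau)d_F(w)}$, where $\kappa_\phi=\phi|_{\SL_2}(-I_2)$ is the value at $-I_2$ of the Jacobson--Morozov $\SL_2$ of $\phi$ (Lemma~\ref{lem:SLmor}). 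Combining this with $\iota'_*\xi_\phi^{\tau}=\iota_*\xi_\phi$ and with the compatibility twist gives
\[
 \varphi_{\iota'}\ \sim\ \omega_{z_G}^{\,n(\tau)}\cdot\varphi_\iota
\]
as $\ell$-adic $L$-homomorphisms, where $\varphi_\iota,\varphi_{\iota'}$ are the $\ell$-adic parameters attached to $\pi$ via $\iota,\iota'$; the point is that the $\kappa_\phi$-twist coming from $\Xi$ is absorbed into the $z_G$-twist coming from the equivariance of $\mathrm{LL}_G$, because $z_G$ and $\kappa_\phi$ act on each typic summand $V_{[\mu]}$ through the \emph{same} scalar $(-1)^{d_G([\mu])}$ by \eqref{eq:mudG}. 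Now, if $\varepsilon(\tau)=1$ then $n(\tau)=0$ and $c'=c$, so $\cF_{\varphi_{\iota'},c'}=\cF_{\varphi_\iota,c}$ on the nose. If $\varepsilon(\tau)=-1$ then $n(\tau)=1$ and $c'=-c$, and \eqref{eq:-crel} gives
\[
 \cF_{\varphi_{\iota'},-c}\ \sim\ \cF_{\omega_{z_G}\varphi_\iota,-c}\ =\ \cF_{\omega_{z_G}^{2}\varphi_\iota,\,c}\ =\ \cF_{\varphi_\iota,c},
\]
using $z_G^2=1$. Hence $\mathrm{LL}_{G,\ell}^{\mathrm{T},\iota'}([\pi])=\mathrm{LL}_{G,\ell}^{\mathrm{T},\iota}([\pi])$ in $\Phi_\ell^{\mathrm{T}}(G)$ for every $[\pi]$, which is the assertion.

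\textbf{Main obstacle.} The genuinely hard point is the Galois-equivariance input for a general connected reductive $G$: the naive statement $\mathrm{LL}_G(\pi_\bC^{\tau})=\mathrm{LL}_G(\pi_\bC)^{\tau}$ in $\Phi^{\SL}(G)$ is \emph{not} the correct one---the normalization of $\Phi(G)$ via $\lvert\cdot\rvert^{1/2}$ (hence the $\rho_G$-shift implicit in $\Xi$) is not stable under $\Aut(\bC)$---and one must instead isolate and prove the corrected form carrying the $z_G$-twist. This is available for $\GL_n$ (and, more generally, whenever $\mathrm{LL}_G$ is pinned down by compatibilities with $\GL_n$, parabolic induction, twisting, central and infinitesimal characters that are themselves visibly $\Aut(\bC)$-equivariant), so the argument above unconditionally recovers the $\GL_n$-case, and in general it reduces Conjecture~\ref{conj:LLT} to this equivariance. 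A secondary technical step is to make precise the interaction of the Jacobson--Morozov construction of Lemma~\ref{lem:SLmor} with the twist $\tau$---i.e.\ that $\kappa_{\phi^\tau}$ is $\wh G(\ol{\bQ}_\ell)$-conjugate to $\kappa_\phi$ by an element centralizing $\varphi_\iota(W_F)$, so that the displayed equivalence $\varphi_{\iota'}\sim\omega_{z_G}^{n(\tau)}\varphi_\iota$ holds within the equivalence relation of Definition~\ref{def:Tell}; this is where one uses that $\kappa_\phi$ is semisimple of order dividing $2$ and commutes with $\mathrm{Image}\,\varphi_\iota$, together with the uniqueness clause of Lemma~\ref{lem:SLmor}.
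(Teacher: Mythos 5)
First, a framing point: the statement you are proving is stated in the paper as a \emph{conjecture}, and the paper does not prove it for general $G$ --- it only proves its equivalence with Conjecture \ref{conj:LLC} (Theorem \ref{thm:CTeq}) and verifies the $\GL_n$ case. Your argument does not close this gap either, and you are candid about that in your ``main obstacle'' paragraph; but as written the reduction is essentially circular. Unwinding Lemma \ref{lem:ellCell} and Theorem \ref{thm:CTeq}, Conjecture \ref{conj:LLT} is \emph{equivalent} to the relation $\varphi_{\iota'}\sim\omega_{z_G}^{\,n(\tau)}\varphi_{\iota}$ that you display as the outcome of your computation (with $c'=-c$ corresponding to $i_{-c}=i_c\circ\omega_{z_G}$ in the C-group picture). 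So the entire content of the conjecture is concentrated in your ``key input,'' namely that $\mathrm{LL}_G(\pi_{\bC}^{\tau})$ differs from $\mathrm{LL}_G(\pi_{\bC})^{\tau}$ by a specific unramified twist; for general $G$ this equivariance statement is not a known property one can cite --- it is a reformulation of the conjecture itself. The final cancellation via \eqref{eq:-crel} and $z_G^2=1$ is correct but is exactly the (already proved) content of Lemma \ref{lem:TLind} and Theorem \ref{thm:CTeq}, not new leverage.

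There is also a concrete error in the step where you claim the two twists cancel. You assert that $z_G$ and $\kappa_{\phi}:=\phi|_{\SL_2}(-I_2)$ ``act on each typic summand $V_{[\mu]}$ through the same scalar $(-1)^{d_G([\mu])}$ by \eqref{eq:mudG}.'' Equation \eqref{eq:mudG} concerns $z_G=\delta_G(-1)$, which is central in $\wh{G}$ and genuinely acts on $V_{[\mu]}$ by the scalar $(-1)^{d_G([\mu])}$; the element $\kappa_{\phi}$ is the value at $-I_2$ of the Jacobson--Morozov $\SL_2$ of Lemma \ref{lem:SLmor}, is not central in general, need not act by any scalar on $V_{[\mu]}$, and depends on $\phi$. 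Already for $G=\GL_2$ and $\phi$ a supercuspidal parameter (trivial monodromy) one has $\kappa_{\phi}=1$ while $z_{\GL_2}=-1$, so the claimed identity of scalars fails on the standard representation; in that case the entire $\iota$-dependence lives in the equivariance defect of $\mathrm{LL}_{\GL_2}$ (ultimately the behaviour of $\varepsilon$-factors under $\Aut(\bC)$), with no contribution from $\Xi$ at all. Your bookkeeping could perhaps be repaired by letting the conjectural equivariance twist itself depend on $\phi$ (absorbing $\kappa_{\phi}$), but then the ``key input'' becomes even less of an independently checkable statement. What does survive, and matches the paper, is the $\GL_n$ case: there the needed equivariance follows from the characterization of $\mathrm{LL}_{\GL_n}$ by twisting (Henniart, Bushnell--Henniart), which is exactly the route the paper takes via the C-group.
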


\begin{rem}\label{rem:KotConj}
Conjecture \ref{conj:LLT} is motivated by 
the Kottwitz conjecture for local Shimura varieties in 
\cite[Conjecture 7.4]{RVlocSh}. 
Let $(G,[b],[\mu])$ be a local Shimura datum as \cite[Definition 5.1]{RVlocSh}. Let $\cM_{G,[b],[\mu],K}$ be the local Shimura variety over the reflex field $E_{[\mu]}$ 
attached to $(G,[b],[\mu])$ and $K \subset G(F)$, which is constructed in \cite[24.1]{ScWeBLp}. 
Let $J$ denote the $\sigma$-centralizer of $b$. 
Let $[\rho] \in \Irr_{\ell} (J)$. 
We put 
\[
H^{\bullet} ((G,[b],[\mu]))[\rho]= (-1)^{d_G([\mu])}
\sum_{i,j \geq 0} (-1)^{i+j} H^{i,j}((G,[b],[\mu]))[\rho]  
\]
where  
\[
H^{i,j}((G,[b],[\mu]))[\rho] = \varinjlim_{K} 
\Ext_{J(F)}^j (H^i_{\mathrm{c}} (\cM_{G,[b],[\mu],K,\widehat{\ol{E}}},\ol{\bQ}_{\ell}),\rho) . 
\]
We put 
\[
 [\cF^{\iota}] 
 =\mathrm{LL}_{J,\ell}^{\mathrm{T},\iota}([\rho]) . 
\]
For $[\pi] \in \Irr_{\ell} (G)$ such that 
$\mathrm{LL}_{G,\ell}^{\mathrm{T},\iota}([\pi])=[\cF^{\iota}]$, 
let 
$\delta_{\pi,\rho}^{\iota}$ be the representation 
of $S_{\cF^{\iota}}$ over $\ol{\bQ}_{\ell}$ 
determined by $\iota$ and 
$\check{\tau}_{\pi'} \otimes \tau_{\rho'}$ constructed 
in \cite[p.~312]{RVlocSh} 
(\cf \cite[2.3]{HKWKotloc} for a construction in a more general case), 
where 
$\pi'=\pi \otimes_{\ol{\bQ}_{\ell},\iota^{-1}} \bC$ and 
$\rho'=\rho \otimes_{\ol{\bQ}_{\ell},\iota^{-1}} \bC$. 
Let $r_{[\mu]}$ be an extension of 
$r_{\wh{G},[\mu]}$ to ${}^L G_{E_{[\mu]}}$ 
constructed by \cite[(1.1.3), (2.1.2)]{KotShtw} 
using \cite[2.4 Remark (3)]{BorAutL}. 
Then the Kottwitz conjecture says that 
\[
 H^{\bullet} ((G,[b],[\mu]))[\rho]=\sum_{[\pi]} \pi \boxtimes 
  \Hom_{S_{\cF^{\iota}}} (\delta_{\pi,\rho}^{\iota}, 
 \cF_S^{\iota}|_{E_{[\mu]}} 
 (r_{[\mu]})) ,  
\]
where $[\pi]$ runs $[\pi] \in \Irr_{\ell} (G)$ such that 
$\mathrm{LL}_{G,\ell}^{\mathrm{T},\iota}([\pi])=[\cF^{\iota}]$. 
If the Kottwitz conjecture is true, 
then the isomorphism class of the $W_{E_{[\mu]}}$-representation 
\[
 \Hom_{S_{\cF^{\iota}}} (\delta_{\pi,\rho}^{\iota}, 
 \cF_S^{\iota}|_{E_{[\mu]}} 
 (r_{[\mu]}))
\]
is independent of $\iota$, since the $\ell$-adic cohomology of 
local Shimura varieties and their group actions are 
independent of $\iota$. 
If Conjecture \ref{conj:LLT} is true, 
$\cF^{\iota}_S$ and $S_{\cF^{\iota}}$ are independent of $\iota$. 
By the observation above, we conjecture also that 
$\delta_{\pi,\rho}^{\iota}$ is independent of $\iota$. 
\end{rem}

\subsection{Comparison}

\begin{thm}\label{thm:CTbij}
There is a canonical bijection 
\[
 \mathcal{CT}_{G,\ell} \colon \Phi_{\ell}^{\mathrm{C}}(G) \to 
 \Phi_{\ell}^{\mathrm{T}}(G) 
\]
such that for any square root $c$ of $q$ in $\ol{\bQ}_{\ell}$
the diagram 
\begin{equation*}\label{eq:CTcomm}
  \xymatrix{
		\Phi_{\ell} (G) 
		\ar[rr]^-{\cC_{G,\ell}^c} \ar[rrd]_-{\cT_{G,\ell}^c} & & 
		\Phi_{\ell}^{\mathrm{C}}(G) \ar[d]^{\mathcal{CT}_{G,\ell}} \\ 
		& & 
		\Phi_{\ell}^{\mathrm{T}}(G) 
	}	
\end{equation*}
is commutative. 
\end{thm}
\begin{proof}
Let $\wt{\varphi}$ be an $\ell$-adic C-parameter for $G$. 
We construct an $\ell$-adic T-parameter $\cF_{\wt{\varphi}}$ for $G$. 
Let 
$r \colon {}^L G (\ol{\bQ}_{\ell}) \to \Aut (V)$ 
be an object in $\Rep_{\ol{\bQ}_{\ell}}^{\mathrm{alg}} ({}^L G)$. 
Then we have a decomposition 
\[
V=\bigoplus_{[\mu] \in \sM_G} V_{[\mu]} 
\]
as representations of $\wh{G}(\ol{\bQ}_{\ell})$ 
where $V_{[\mu]}$ is the 
$r_{\wh{G},[\mu]}$-typic part of $V$. 
We extend $r$ to 
$\wt{r} \in \Rep_{\ol{\bQ}_{\ell}}^{\mathrm{alg}} ({}^C G)$ 
by letting $\bG_{\mathrm{m}}$ act on 
$V_{[\mu]}$ by $z \mapsto z^{d_G([\mu])}$ 
using \eqref{eq:strCG}, 
where the extension is well-defined by \eqref{eq:mudG}. 
Then we define $\cF_{\wt{\varphi}}$ by 
\begin{equation*}
	\cF_{\wt{\varphi}}(r)=\wt{r} \circ \wt{\varphi}. 
\end{equation*}
If two $\ell$-adic C-parameters for $G$ are conjugate by an element of 
$\wh{\wt{G}}(\ol{\bQ}_{\ell})$, then they are conjugate by an element of 
$\wh{G}(\ol{\bQ}_{\ell})$. 
Hence, 
\[
 \mathcal{CT}_{G,\ell} ([\wt{\varphi}]) = [\cF_{\wt{\varphi}}] 
\]
is well-defined. 
We have the commutative diagram in the claim 
by 
	\begin{equation}\label{eq:wtrF}
	\wt{r} \circ \varphi_c=\wt{r} \circ i_c \circ \varphi=\cF_{\varphi,c}(r). 
\end{equation}
Then $\mathcal{CT}_{G,\ell}$ is a bijection because $\cC_{G,\ell}^c$ and $\cT_{G,\ell}^c$ are bijections by Lemma \ref{lem:ellCell} and Lemma \ref{lem:ellTell}. 
\end{proof}

\begin{cor}\label{cor:CTeq}
Conjecture \ref{conj:LLC} and 
Conjecture \ref{conj:LLT} are equivalent. 
\end{cor}
\begin{proof}
We have 
\[
\mathcal{CT}_{G,\ell} \circ \mathrm{LL}_{G,\ell}^{\mathrm{C},\iota}=\mathrm{LL}_{G,\ell}^{\mathrm{T},\iota} 
\]
for any $\iota \colon \bC \stackrel{\sim}{\to} \ol{\bQ}_{\ell}$ by 
Theorem \ref{thm:CTbij}. 
Since $\mathcal{CT}_{G,\ell}$ is a canonical bijection independent of $\iota$, the claim follows. 
\end{proof}

\begin{cor}\label{cor:conjGL}
Conjecture \ref{conj:LLC} and 
Conjecture \ref{conj:LLT} 
are true for $\GL_n$. 
\end{cor}
\begin{proof}
Recall that the local Langlands correspondence for $\GL_n$ 
is known by \cite{LRSellL} and \cite{HTsimSh}. 
By Corollary \ref{cor:CTeq}, it suffices to check Conjecture 
\ref{conj:LLC}. 
Assume that 
$\mathrm{LL}_{\GL_n,\ell}^{\mathrm{C},\iota}([\pi])
=[\varphi^{\iota}]$. 
Note that $z_{\GL_n}=(-1)^{n-1}$, because 
\[
 \delta_{\GL_n} \colon \bG_{\mathrm{m}} \to 
 \wh{T_n} \subset \wh{\GL_n} ; \ 
 z \mapsto 
 \begin{pmatrix}
      z^{n-1} & 0 & \ldots & 0 \\
      0 & z^{n-3} & \ldots & 0 \\
      \vdots & \vdots & \ddots & \vdots \\
      0 & 0 & \ldots & z^{1-n}
 \end{pmatrix}, 
\]
where we define $\delta_{\GL_n}$ using 
the diagonal maximal torus $T_n$ of $\GL_n$ 
and the Borel subgroup of the 
upper triangular matrices in $\GL_n$. 
Let 
\[
 \wt{r} \colon {}^C \GL_n \to \wh{\GL_n} ;\ 
 [(g,z,w)] \mapsto gz^{n-1}. 
\]
We show that 
$[ \wt{r} \circ \varphi^{\iota} ]$ 
is independent of $\iota$ in a similar way as the proof of \cite[35.1 Theorem]{BHLLCGL2}. 
For $[\rho] \in \Irr (\GL_n)$, 
we write $\mathrm{LL}_{\GL_n}^{\mathrm{WD}}([\rho])$ for the image of 
$\mathrm{LL}_{\GL_n}([\rho])$ under $\Phi (\GL_n) \simeq \Phi_{\bC}^{\mathrm{WD}}(\GL_n)$, 
and define $\tau_n ([\rho])$ as the twist of 
$\mathrm{LL}^{\mathrm{WD}}_{\GL_n}([\rho])$ by $(a,w) \mapsto q^{-(n-1)d_F(w)/2}$. 
Then $\tau_n ([\pi \otimes_{\ol{\bQ}_{\ell},\iota^{-1}} \bC])$ 
and $[ \wt{r} \circ \varphi^{\iota} ]$ 
corresponds under the identification by 
\[
 \Phi_{\bC}^{\mathrm{WD}} (\GL_n) 
\stackrel{\Phi_{\iota}^{\mathrm{WD}} (\GL_n)}{\simeq} \Phi_{\ol{\bQ}_{\ell}}^{\mathrm{WD}} (\GL_n) 
\simeq \Phi_{\ell} (\GL_n). 
\]
Therefore it suffices to show that 
$[\rho] \mapsto \tau_n ([\rho])$ is compatible with twists by 
$\Aut (\bC)$. 
By \cite[1.8 Corollaire]{HenUnecar}, 
$\{ \tau_n \}_{n \geq 1}$ is characterized by 
the compatibility of $\tau_1$ with the local class field theory 
and the equalities 
\begin{align*}
	L\left( [\rho] \times [\rho'],s+\frac{n+n'}{2}-1\right) &= L(\tau_n ([\rho]) \otimes \tau_{n'} ([\rho']),s),\\ 
	\varepsilon \left([\rho] \times [\rho'],s+\frac{n+n'}{2}-1,\psi \right)&= \varepsilon(\tau_n ([\rho]) \otimes \tau_{n'} ([\rho']),s,\psi)   
\end{align*}
for $n' <n$, $[\rho] \in \Irr (\GL_{n})$ and $[\rho'] \in \Irr (\GL_{n'})$, where $\psi$ is a non-trivial character of $F$. 
The compatibility with twists by $\Aut (\bC)$ follows from this characterization, \cite[3.2 Theorem]{BuHeDHII} and 
\cite[35.3]{BHLLCGL2}. 

Let $\iota' \colon \bC \stackrel{\sim}{\to} \ol{\bQ}_{\ell}$ 
be another choice. 
Taking a conjugation of $\varphi^{\iota'}$ by an element of 
$\wh{\GL_n}(\ol{\bQ}_{\ell})$, 
we may assume that 
$\wt{r} \circ \varphi^{\iota}=\wt{r} \circ \varphi^{\iota'}$. 
Hence there is a map $\chi \colon W_F \to \ol{\bQ}_{\ell}^{\times}$ such that 
\[
 \varphi^{\iota} (w) =\varphi^{\iota'} (w) 
 [(\chi(w)^{1-n},\chi(w),1) ] 
\]
for $w \in W_F$. 
By Definition \ref{def:ellC}, 
we have 
$t_{\bG_{\mathrm{m}}} \circ \varphi^{\iota}
 =t_{\bG_{\mathrm{m}}} \circ \varphi^{\iota'}$. 
Hence we have $\chi (w)^2=1$ for $w \in W_F$. 
This implies 
that $\varphi^{\iota} =\varphi^{\iota'}$ by 
$z_{\GL_n}=(-1)^{n-1}$ and \eqref{eq:strCG}. 
Hence Conjecture \ref{conj:LLC} is true. 
\end{proof}

\begin{rem}
We can also show Corollary \ref{cor:conjGL} using the geometric realization of the local Langlands correspondence for $\GL_n$ in 
the $\ell$-adic etale cohomology of the Lubin--Tate spaces after the reduction to the supercuspidal case in the same spirit as Remark \ref{rem:KotConj} (\cf \cite[Lemma VII.1.6]{HTsimSh}). 
Here we gave a proof by the characterization without appealing to such a geometric realization. 
\end{rem}

\begin{cor}\label{cor:conjPGL}
	Conjecture \ref{conj:LLC} and 
	Conjecture \ref{conj:LLT} 
	are true for $\PGL_n$. 
\end{cor}
\begin{proof}
By Corollary \ref{cor:CTeq}, it suffices to check Conjecture 
\ref{conj:LLC}. This follows from Corollary \ref{cor:conjGL} and 
the commutative diagram 
\[
\xymatrix{
	\Irr_{\ell} (\PGL_n(F)) 
	\ar[rr]^-{\mathrm{LL}_{\PGL_n,\ell}^{\iota}} \ar@{^{(}->}[d] & & 
	\Phi_{\ell} (\PGL_n) \ar@{^{(}->}[d] \ar[rr]^-{\cC_{\PGL_n,\ell}^{\iota(q^{\frac{1}{2}})}} & &  \Phi_{\ell}^{\mathrm{C}} (\PGL_n)  \ar@{^{(}->}[d]\\ 
	\Irr_{\ell} (\GL_n(F)) 
	\ar[rr]^-{\mathrm{LL}_{\GL_n,\ell}^{\iota}} & & 
	\Phi_{\ell} (\GL_n) \ar[rr]^-{\cC_{\GL_n,\ell}^{\iota(q^{\frac{1}{2}})}} & &  \Phi_{\ell}^{\mathrm{C}} (\GL_n) 
}
\]
since the vertical injections are independent of $\iota$. 
\end{proof}


\noindent
Naoki Imai\\
Graduate School of Mathematical Sciences, The University of Tokyo, 
3-8-1 Komaba, Meguro-ku, Tokyo, 153-8914, Japan \\
naoki@ms.u-tokyo.ac.jp 

\end{document}